\definecolor{webgreen}{rgb}{0,.5,0}
\definecolor{webbrown}{rgb}{.6,0,0}
\theoremstyle{plain}
\newtheorem{theorem}{Theorem}
\newtheorem{corollary}[theorem]{Corollary}
\newtheorem{proposition}[theorem]{Proposition}
\theoremstyle{definition}
\theoremstyle{remark}
\begin{document}
\title{Fun with Latin Squares}
\author{Michael Han}
\author{Ella Kim}
\author{Evin Liang}
\author{Miriam (Mira) Lubashev}
\author{Oleg Polin}
\author{Vaibhav Rastogi}
\author{Benjamin Taycher}
\author{Ada Tsui}
\author{Cindy Wei}
\affil{PRIMES STEP}
\author{Tanya Khovanova}
\affil{MIT}

\maketitle

\begin{abstract}
Do you want to know what an anti-chiece Latin square is? Or what a non-consecutive toroidal modular Latin square is? We invented a ton of new types of Latin squares, some inspired by existing Sudoku variations. We can't wait to introduce them to you and answer important questions, such as: do they even exist? If so, under what conditions? What are some of their interesting properties? And how do we generate them?
\end{abstract}

\section{Introduction}\label{sec:intro}

In this paper, we study variations of Latin squares. 

A Latin square is an $n$ by $n$ table filled with integers 1 through $n$ so that each row and column contain each number exactly once. In other words, each row/column is a permutation of integers 1 through $n$, and $n$ is called the order, or the size of the Latin square. The definition and history of Latin squares are covered in many books; see, for example \cite{CD,WG}.

The number of Latin squares of order $n$ was calculated by Shao and Wai \cite{SW}. It is sequence A002860 in the OEIS \cite{OEIS}:
\[1,\ 2,\ 12,\ 576,\ 161280,\ 812851200,\ 61479419904000,\ 108776032459082956800, \ldots.\]

Special Latin squares have been studied before. A strict knight's move Latin square is a Latin square in which any two cells that contain the same symbol are joined by a sequence of knight's moves using only cells containing that symbol. Owens \cite{O} showed that there exists a strict knight's move Latin square of order $n$ without diagonal adjacencies if and only if $n = 5$. To within permutations of the symbols and transposition, there is only one such square which is shown on the right of Figure~\ref{fig:skm5}. He also showed that allowing diagonal adjacencies, there exists a strict knight's move Latin square of order $n$ if and only if either $n = 5$ or $n \equiv 0$ (mod 4). 

Figure~\ref{fig:skm5} shows two examples of strict knight's move Latin squares of order 5. The square on the left contains diagonal adjacencies, while the square on the right does not.

\begin{figure}[ht!]
{
\begin{center}
\begin{tikzpicture}
\draw[step=0.5cm,color=black, line width=1.5] (-1,-1) grid (1.5,1.5);
\node at (-0.75, +1.25) {1};
\node at (-0.25, +1.25) {2};
\node at (+0.25, +1.25) {3};
\node at (+0.75, +1.25) {4};
\node at (+1.25, +1.25) {5};
\node at (-0.75,+0.75) {5};
\node at (-0.25,+0.75) {4};
\node at (+0.25,+0.75) {1};
\node at (+0.75,+0.75) {3};
\node at (+1.25,+0.75) {2};
\node at (-0.75,+0.25) {4};
\node at (-0.25,+0.25) {3};
\node at (+0.25,+0.25) {2};
\node at (+0.75,+0.25) {5};
\node at (+1.25,+0.25) {1};
\node at (-0.75, -0.25) {2};
\node at (-0.25, -0.25) {5};
\node at (+0.25, -0.25) {4};
\node at (+0.75, -0.25) {1};
\node at (+1.25, -0.25) {3};
\node at (-0.75,-0.75) {3};
\node at (-0.25,-0.75) {1};
\node at (+0.25,-0.75) {5};
\node at (+0.75,-0.75) {2};
\node at (+1.25,-0.75) {4};
\end{tikzpicture}
\quad \quad
\begin{tikzpicture}
\draw[step=0.5cm,color=black, line width=1.5] (-1,-1) grid (1.5,1.5);
\node at (-0.75, +1.25) {1};
\node at (-0.25, +1.25) {2};
\node at (+0.25, +1.25) {3};
\node at (+0.75, +1.25) {4};
\node at (+1.25, +1.25) {5};
\node at (-0.75,+0.75) {4};
\node at (-0.25,+0.75) {5};
\node at (+0.25,+0.75) {1};
\node at (+0.75,+0.75) {2};
\node at (+1.25,+0.75) {3};
\node at (-0.75,+0.25) {2};
\node at (-0.25,+0.25) {3};
\node at (+0.25,+0.25) {4};
\node at (+0.75,+0.25) {5};
\node at (+1.25,+0.25) {1};
\node at (-0.75, -0.25) {5};
\node at (-0.25, -0.25) {1};
\node at (+0.25, -0.25) {2};
\node at (+0.75, -0.25) {3};
\node at (+1.25, -0.25) {4};
\node at (-0.75,-0.75) {3};
\node at (-0.25,-0.75) {4};
\node at (+0.25,-0.75) {5};
\node at (+0.75,-0.75) {1};
\node at (+1.25,-0.75) {2};
\end{tikzpicture}
\end{center}
	\caption{Strict knight's move Latin squares of order 5.}
	\label{fig:skm5}
	}
\end{figure}

We start with definitions in Section~\ref{sec:def}. 

Section~\ref{sec:NC} is devoted to non-consecutive Latin squares, which we define as Latin squares in which orthogonally adjacent cells can't be consecutive. We alternatively call them shy squares. Not counting a trivial non-consecutive Latin square of order 1, the smallest possible such square is of order 5. Any non-consecutive Latin square of order 5 is built by cycling a cyclable permutation. It follows that all such squares of order 5 are also toroidal and modular. Such squares of larger orders are easier to construct, and they exist. We show how to construct some of them by cycling a permutation or by using a smaller square. We also introduce non-consecutive by king's move Latin squares in Section~\ref{sec:NonconbyKM}. We show that they exist for $n > 6$. If we add an extra constraint that diagonal neighbors can't be the same numbers, then such squares exist for $n > 12$.

Section~\ref{sec:Consecutive} is devoted to Latin squares on the other side of the spectrum from the previous one: the neighboring numbers have to be consecutive. We also call such squares \textit{nosy}. This constraint is seriously restrictive. We show that consecutive as well as toroidal consecutive Latin squares do not exist for $n>2$. Modular consecutive Latin squares exists, and they are left- or right- cyclic for any $n$ except $n=4$. There are $4n$ of consecutive modular Latin squares of order $n$: there are $2n$ ways to choose the first row, and there are two ways to choose which way to cycle it. For $n=4$, there are two more consecutive Latin squares, neither of which is cyclic. In any case, all modular cyclic Latin squares are also toroidal.

In Section~\ref{sec:AC} we introduce a \textit{chiece} which means a generic chess piece. The word chiece is a portmanteau of the two words chess and piece. This section is devoted to anti-chiece Latin squares and is inspired by the famous Sudoku type: anti-knight. In anti-chiece Latin squares, cells separated by a chiece's move in chess have to be different. We start the section by studying anti-knight Latin squares. The smallest non-trivial example of such a square is of order 4, and there are 4 anti-knight Latin squares with a fixed first row. Surprisingly, for order 5, there are only 2 such squares with a fixed first row. Moreover, all non-consecutive Latin squares of order 5 are also anti-knight squares. Anti-knight Latin squares exist for any order greater than 3. 

The first non-trivial example of an anti-queen Latin square is of order 5: there are two squares with a fixed first row. For larger orders $n$, it is easy to produce such squares by using a $k$-cyclic square, where $n$ is coprime with $k$, $k-1$, and $k+1$. Such $k$-$n$ pair exists whenever $n$ is coprime with 6.

Anti-queen Latin squares are also anti-king. We show that anti-king but not anti-queen Latin squares exist for composite $n > 6$.

Section~\ref{sec:Chess} is devoted to chiece Latin squares. A chiece Latin square is such that every cell has to have at least one other cell with the same number at a chiece's move apart. We show that if there exists an $n$ by $n$ chiece Latin square, then for all positive integers $m$, there exists an $nm$ by $nm$ chiece Latin square. It follows that bishop and king Latin squares exist for all even orders, and knight Latin squares exist for all orders divisible by four or five. We also show that king Latin squares exist only for even orders.

\section{Definitions and such}\label{sec:def}

A Latin square is an $n$ by $n$ table filled with integers 1 through $n$ so that each number appears exactly once in each row and column. The number $n$ can be referred to as the \textit{size} or the \textit{order} of the Latin square. In colloquial language, the word size is typically used, but the word order is more common in formal mathematical texts. As this paper is recreational, we use them interchangeably.
	
This is a good place to provide an example of a Latin square, but there are so many to choose from. As such, a systematic method to choose a square is to arrange squares as words in a dictionary. To do this, we consider the integers 1 through $n$ to be the letters of an alphabet, ordered from 1 to $n$. Then, we can view a Latin square as a word in this alphabet written by rows top to bottom and within each row left to right. The \textit{lexicographically earliest} Latin square with given constraints is defined to be the first such square when the squares are sorted in lexicographic order. For example, the lexicographically earliest Latin square of size 4 is shown in Figure~\ref{fig:LE4}.

\begin{figure}[ht!]
	{
	\begin{center}
	\begin{tikzpicture}
	\draw[step=0.5cm,color=black, line width=1.5] (-1,-1) grid (1,1);
	\node at (-0.75,+0.75) {1};
	\node at (-0.25,+0.75) {2};
	\node at (+0.25,+0.75) {3};
	\node at (+0.75,+0.75) {4};
	\node at (-0.75,+0.25) {2};
	\node at (-0.25,+0.25) {1};
	\node at (+0.25,+0.25) {4};
	\node at (+0.75,+0.25) {3};
	\node at (-0.75, -0.25) {3};
	\node at (-0.25, -0.25) {4};
	\node at (+0.25, -0.25) {1};
	\node at (+0.75, -0.25) {2};
	\node at (-0.75,-0.75) {4};
	\node at (-0.25,-0.75) {3};
	\node at (+0.25,-0.75) {2};
	\node at (+0.75,-0.75) {1};
	\end{tikzpicture}
	\end{center}
	\caption{The lexicographically earliest Latin square of order 4.}
	\label{fig:LE4}
	}
\end{figure}

\subsection{Cyclicity}

Arguably the easiest method to build a Latin square is to cycle its first row such that every subsequent row is a cyclic permutation of it. 

Figure~\ref{fig:LC4} shows a general example, where the shift from one row to another is not the same. In this example, the second row is a shift by 2 to the right of the first row, the third row is a shift to the left of the first row, and the last row is a shift to the right of the first row.	
		
	\begin{figure}[ht!]	
		{	
		\begin{center}	
		\begin{tikzpicture}	
		\draw[step=0.5cm,color=black, line width=1.5] (-1,-1) grid (1,1);	
		\node at (-0.75,+0.75) {1};	
		\node at (-0.25,+0.75) {2};	
		\node at (+0.25,+0.75) {3};	
		\node at (+0.75,+0.75) {4};	
		\node at (-0.75,+0.25) {3};	
		\node at (-0.25,+0.25) {4};	
		\node at (+0.25,+0.25) {1};	
		\node at (+0.75,+0.25) {2};	
		\node at (-0.75, -0.25) {2};	
		\node at (-0.25, -0.25) {3};	
		\node at (+0.25, -0.25) {4};	
		\node at (+0.75, -0.25) {1};	
		\node at (-0.75,-0.75) {4};	
		\node at (-0.25,-0.75) {1};	
		\node at (+0.25,-0.75) {2};	
		\node at (+0.75,-0.75) {3};	
		\end{tikzpicture}	
		\end{center}	
		\caption{A Latin square of order 4, in which rows are cyclic permutations of each other.}	
		\label{fig:LC4}	
		}	
	\end{figure}

We call a Latin square \textit{k-cyclic} if, for some $k$, each subsequent row is a right cyclic shift by $k$ of the previous row. Sometimes we omit $k$ and call a Latin square \textit{cyclic} if it is $k$-cyclic for some $k$.
	
The cycling procedure doesn't always produce a Latin square: if $k$ is not coprime with $n$, then the resulting square has repeated numbers in each column. However, if $k$ is coprime with $n$, then the cycling procedure does produce a Latin square. Thus, often the easiest choice for a shift is $k=1$, as 1 is coprime with every number.
	
We call a Latin square \textit{left-cyclic} if each subsequent row is a left shift by one of the previous row, and similarly, we call a Latin square \textit{right-cyclic} if each subsequent row is a right shift by one. In linear algebra, such matrices are often called anit-circulant and circulant respectively. Examples of left- and right-cyclic Latin squares are in Figure~\ref{fig:3}.

\begin{figure}[ht!]
	{
	\begin{center}
	\begin{tikzpicture}
	\draw[step=0.5cm,color=black, line width=1.5] (-1,-1) grid (0.5,0.5);
	\node at (-0.75,+0.25) {1};
	\node at (-0.25,+0.25) {2};
	\node at (+0.25,+0.25) {3};
	\node at (-0.75, -0.25) {2};
	\node at (-0.25, -0.25) {3};
	\node at (+0.25, -0.25) {1};
	\node at (-0.75,-0.75) {3};
	\node at (-0.25,-0.75) {1};
	\node at (+0.25,-0.75) {2};
	\end{tikzpicture}
\quad \quad
	\begin{tikzpicture}
	\draw[step=0.5cm,color=black, line width=1.5] (-1,-1) grid (0.5,0.5);
	\node at (-0.75,+0.25) {1};
	\node at (-0.25,+0.25) {2};
	\node at (+0.25,+0.25) {3};
	\node at (-0.75, -0.25) {3};
	\node at (-0.25, -0.25) {1};
	\node at (+0.25, -0.25) {2};
	\node at (-0.75,-0.75) {2};
	\node at (-0.25,-0.75) {3};
	\node at (+0.25,-0.75) {1};
	\end{tikzpicture}
	\end{center}
	\caption{A left- and a right-cyclic Latin square of order 3.}
	\label{fig:3}
	}
\end{figure}

Interestingly, all Latin squares of size 3 are cyclic.

\subsection{Toroidal Latin squares}

In a toroidal Latin square, the square is considered to be a torus. That is, the left-most and right-most cells of the same row are considered to be adjacent, and similarly, the top-most and bottom-most cells of the same column are considered to be adjacent. So why do we care if the square is a torus or not? In standard Latin squares, this definition doesn't matter, but in this paper, we add restrictions to Latin squares, and this definition becomes relevant.
	
In Section~\ref{sec:NC} and Section~\ref{sec:Consecutive}, we consider non-consecutive and consecutive Latin squares in which adjacent numbers are either not allowed to or required to be consecutive. Toroidal Latin squares are more restrictive for non-consecutive and consecutive Latin squares as they add extra constraints for the border cells.

In Section~\ref{sec:AC} and Section~\ref{sec:Chess}, we look at how each number in a Latin square behaves with the relationship to some chess moves. In anti-chess Latin squares, we forbid the numbers to be a chess move apart for a particular chess piece. This makes toroidal Latin squares more restrictive as pieces can move through the border lines. In chess Latin squares, we require that each number is a chess move apart from another cell with the same number for a particular chess piece. This makes toroidal Latin squares less restrictive: a chess Latin square is automatically a toroidal chess Latin square.

\subsection{Modular Latin squares}

In a \textit{modular Latin square}, numbers are considered modulo $n$. That is, 1 and $n$ are considered consecutive numbers. This is only relevant in Section~\ref{sec:NC} where we study non-consecutive Latin squares and in Section~\ref{sec:Consecutive} where we study consecutive Latin squares. Modular Latin squares are more restrictive for non-consecutive Latin squares as they add an extra constraint forbidding numbers 1 and $n$ to be neighbors. Modular Latin squares are less restrictive for consecutive Latin squares as they allow numbers 1 and $n$ to be neighbors.

\subsection{Products of Latin squares}

If we have two Latin squares of orders $m$ and $n$, there is a simple method, known as the Kronecker product, to generate a Latin square of order $mn$.
	
We denote an entry with coordinates $(i,j)$ in our $m$ by $m$ Latin square $L$ by $L_{i,j}$ and in our $n$ by $n$ Latin square $B$ by $B_{i,j}$. 

We divide the $mn$ by $nm$ grid into $m^2$ smaller grids with dimensions $n$ by $n$. In each smaller $n$ by $n$ grid, the Latin square $B$ is placed with each entry incremented by $n$ times the value in the corresponding cell of the $m$ by $m$ Latin square $L-1$. More formally, the $mn$ by $mn$ grid is filled in by placing the number $(L_{a,c}-1)n+B_{b,d}$ into the cell with coordinates $(an+b,cn+d)$. 
	
For example, suppose the Latin squares in Figure~\ref{fig:3and2} are the Latin squares we seek to multiply.

\begin{figure}[ht!]
{
\begin{center}
\begin{tikzpicture}
\draw[step=0.5cm,color=black, line width=1.5] (-1,-1) grid (0.5,0.5);
\node at (-0.75,+0.25) {1};
\node at (-0.25,+0.25) {2};
\node at (+0.25,+0.25) {3};
\node at (-0.75,-0.25) {2};
\node at (-0.25,-0.25) {3};
\node at (+0.25,-0.25) {1};
\node at (-0.75, -0.75) {3};
\node at (-0.25, -0.75) {1};
\node at (+0.25, -0.75) {2};
\end{tikzpicture}
\quad \quad
\begin{tikzpicture}
\draw[step=0.5cm,color=black, line width=1.5] (0,0) grid (1,1);
\node at (+0.25,+0.25) {2};
\node at (+0.75,+0.25) {1};
\node at (+0.25,+0.75) {1};
\node at (+0.75,+0.75) {2};
\end{tikzpicture}
\end{center}
	\caption{The two Latin squares to multiply.}
	\label{fig:3and2}
	}
\end{figure}
The product of the first Latin square times the second Latin square is shown on the left of Figure~\ref{fig:product}, while the product of the second Latin square times the first Latin square is shown on the right. Note that these two products are different due to this multiplication not being commutative.
\begin{figure}[ht!]
{
\begin{center}
\begin{tikzpicture}
\draw[step=0.5cm,color=black, line width=1.5] (0,0) grid (3,3);
\node at (+0.25,+0.25) {6};
\node at (+0.75,+0.25) {5};
\node at (+1.25,+0.25) {2};
\node at (+1.75,+0.25) {1};
\node at (+2.25,+0.25) {4};
\node at (+2.75,+0.25) {3};
\node at (+0.25,+0.75) {5};
\node at (+0.75,+0.75) {6};
\node at (+1.25,+0.75) {1};
\node at (+1.75,+0.75) {2};
\node at (+2.25,+0.75) {3};
\node at (+2.75,+0.75) {4};
\node at (+0.25,+1.25) {4};
\node at (+0.75,+1.25) {3};
\node at (+1.25,+1.25) {6};
\node at (+1.75,+1.25) {5};
\node at (+2.25,+1.25) {2};
\node at (+2.75,+1.25) {1};
\node at (+0.25,+1.75) {3};
\node at (+0.75,+1.75) {4};
\node at (+1.25,+1.75) {5};
\node at (+1.75,+1.75) {6};
\node at (+2.25,+1.75) {1};
\node at (+2.75,+1.75) {2};
\node at (+0.25,+2.25) {2};
\node at (+0.75,+2.25) {1};
\node at (+1.25,+2.25) {4};
\node at (+1.75,+2.25) {3};
\node at (+2.25,+2.25) {6};
\node at (+2.75,+2.25) {5};
\node at (+0.25,+2.75) {1};
\node at (+0.75,+2.75) {2};
\node at (+1.25,+2.75) {3};
\node at (+1.75,+2.75) {4};
\node at (+2.25,+2.75) {5};
\node at (+2.75,+2.75) {6};
\end{tikzpicture}
\quad \quad
\begin{tikzpicture}
\draw[step=0.5cm,color=black, line width=1.5] (0,0) grid (3,3);
\node at (+0.25,+0.25) {6};
\node at (+0.75,+0.25) {4};
\node at (+1.25,+0.25) {5};
\node at (+1.75,+0.25) {3};
\node at (+2.25,+0.25) {1};
\node at (+2.75,+0.25) {2};
\node at (+0.25,+0.75) {5};
\node at (+0.75,+0.75) {6};
\node at (+1.25,+0.75) {4};
\node at (+1.75,+0.75) {2};
\node at (+2.25,+0.75) {3};
\node at (+2.75,+0.75) {1};
\node at (+0.25,+1.25) {4};
\node at (+0.75,+1.25) {5};
\node at (+1.25,+1.25) {6};
\node at (+1.75,+1.25) {1};
\node at (+2.25,+1.25) {2};
\node at (+2.75,+1.25) {3};
\node at (+0.25,+1.75) {3};
\node at (+0.75,+1.75) {1};
\node at (+1.25,+1.75) {2};
\node at (+1.75,+1.75) {6};
\node at (+2.25,+1.75) {4};
\node at (+2.75,+1.75) {5};
\node at (+0.25,+2.25) {2};
\node at (+0.75,+2.25) {3};
\node at (+1.25,+2.25) {1};
\node at (+1.75,+2.25) {5};
\node at (+2.25,+2.25) {6};
\node at (+2.75,+2.25) {4};
\node at (+0.25,+2.75) {1};
\node at (+0.75,+2.75) {2};
\node at (+1.25,+2.75) {3};
\node at (+1.75,+2.75) {4};
\node at (+2.25,+2.75) {5};
\node at (+2.75,+2.75) {6};
\end{tikzpicture}
\end{center}
	\caption{The products of two Latin squares.}
	\label{fig:product}
	}
\end{figure}

\section{Non-consecutive (Shy) Latin squares}\label{sec:NC}

A \textit{non-consecutive Latin square} is a Latin square where two orthogonally adjacent cells can't contain consecutive numbers. Consecutively, each row/column is a non-consecutive permutation. We also call such squares \textit{shy} Latin squares as numbers avoid their neighbors.

We start by looking at non-consecutive permutations.

\subsection{Non-consecutive Permutations}

A permutation is called \textit{non-consecutive} (also \textit{shy}) if any two neighboring numbers in it are not consecutive. For example, the permutation 3241 is not shy because 3 and 2 are next to each other, but the permutation 2413 is shy.

\subsubsection{Counting non-consecutive permutations}

Consider the sequence $a(n)$: the number of non-consecutive permutations of size $n$. We have $a(1) = 1$. We know that we can't place 2 next to either 1 or 3. Hence, $a(2) = a(3)=0$. 

For $n=4$, the number 2 can't have two neighbors. Similarly, 3 can't have two neighbors. It follows that 2413 and 3142 are the only two non-consecutive permutations of size 4.

Consider $n=5$. We can see that the lexicographically earliest non-consecutive permutation is 13524. The first and the last terms are not neighbors, so cycling this permutation produces four more non-consecutive permutations: 35241, 52413, 24135, and 41352. We can add five more permutations that are reverses of the above. They are also cycles of each other: 42531, 14253, 31425, 53142, and 25314. We have four more permutations: 24153, 31524, 35142, and 42513. These permutations can't cycle as the first and the last numbers are neighbors. We call these permutations \textit{non-cyclable}.

We calculated that there are 14 non-consecutive permutations of size 5.

The sequence $a(n)$ is in the OEIS as integer sequence A002464:

\[1,\ 0,\ 0,\ 2,\ 14,\ 90,\ 646,\ 5242,\ 47622,\ 479306,\ 5296790,\ 63779034, \ldots.\]
% 831283558, 11661506218, 175203184374, 2806878055610, 47767457130566, 860568917787402, 16362838542699862, 327460573946510746, 6880329406055690790, 151436547414562736234, 3484423186862152966838

This sequence also counts the number of ways to arrange $n$ non-attacking kings on an $n$ by $n$ board with one king per row/column. To see this, we can make the first number of the permutation to represent the position of the king in the first row from the top left, the second number for the second row, and so on. We know that the positions cannot have a difference of $1$ because otherwise, two kings would attack each other. So, this problem is the same as finding the non-consecutive permutations.

\subsection{Small sizes}

For $n=1$, the only possible Latin square is non-consecutive. Non-consecutive Latin squares do not exist for $1 < n < 4$, because we can't construct a non-consecutive permutation. For $n=4$, such permutations exist, but there are only 2 of them, so not enough to form 4 rows of a square. Thus, the simplest case is $n=5$.

We look at shy Latin squares of size 5. The lexicographically earliest non-consecutive Latin square of order 5 is shown in Figure~\ref{fig:LENC5}.

%Proof that this is lexicographically earliest. The lexicographically earliest row is 13524. After that, the smallest number possible to start the second row is 3. The only non-consecutive permutation starting with 3 is 13425. That means the column is 13524: the same as the row. Suppose we put 1 after 3, so the second row starts with 31. After that, we can't place anything as the next cell has neighbors 1 and 4, and available numbers 245. That means the second row has to start as 35. The next number can't be 1, because in the third column, we have 5 and 1 at the beginning, and we can't finish it. We can't use 4 as then 1 and 2 are left in the row. Thus the next number is 2 followed by 4 and 1. The same argument works for the column. After that, we uniquely finish it.

\begin{figure}[ht!]
{
	\begin{center}
	\begin{tikzpicture}
	\draw[step=0.5cm,color=black, line width=1.5] (-1,-1) grid (1.5,1.5);
	\node at (-0.75, +1.25) {1};
	\node at (-0.25, +1.25) {3};
	\node at (+0.25, +1.25) {5};
	\node at (+0.75, +1.25) {2};
	\node at (+1.25, +1.25) {4};
	\node at (-0.75,+0.75) {3};
	\node at (-0.25,+0.75) {5};
	\node at (+0.25,+0.75) {2};
	\node at (+0.75,+0.75) {4};
	\node at (+1.25,+0.75) {1};
	\node at (-0.75,+0.25) {5};
	\node at (-0.25,+0.25) {2};
	\node at (+0.25,+0.25) {4};
	\node at (+0.75,+0.25) {1};
	\node at (+1.25,+0.25) {3};
	\node at (-0.75, -0.25) {2};
	\node at (-0.25, -0.25) {4};
	\node at (+0.25, -0.25) {1};
	\node at (+0.75, -0.25) {3};
	\node at (+1.25, -0.25) {5};
	\node at (-0.75,-0.75) {4};
	\node at (-0.25,-0.75) {1};
	\node at (+0.25,-0.75) {3};
	\node at (+0.75,-0.75) {5};
	\node at (+1.25,-0.75) {2};
	\end{tikzpicture}
	\end{center}
	\caption{The lexicographically earliest non-consecutive Latin square of order 5.}
	\label{fig:LENC5}
	}
\end{figure}

We see that it is a left-cyclic Latin square. We show that any non-consecutive Latin square of size 5 can be built in a similar way: by using a cyclable non-consecutive permutation and shifting it left or right by 1. We call this procedure \textit{cycling a cyclable permutation}.

\begin{proposition}\label{prop:5isCyclable}
Any non-consecutive Latin square of size 5 is built by cycling a cyclable permutation.
\end{proposition}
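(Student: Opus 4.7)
The plan is to reduce the proposition to a local statement about two consecutive rows and then iterate. Fix a non-consecutive Latin square $L$ of order $5$ with rows $r_1,\ldots,r_5$. I will show that $r_{i+1}$ must be a cyclic shift by $\pm 1$ of $r_i$, and then that the shift direction cannot change across the square. The starting observation is a strong pigeonhole: the value $3$ in $r_2$ must lie in one of the two columns where $r_1$ has $1$ or $5$, since $3$ cannot sit below a $2$, $3$, or $4$. The same reasoning forces the values $2$ and $4$ in $r_2$ into exactly two candidate columns each, while $1$ and $5$ each have three. Viewing the assignment of values in $r_2$ to columns as a perfect matching in this bipartite compatibility graph, a short case split gives at most four such matchings.

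Two of those four matchings always yield the left shift and the right shift of $r_1$; a third typically produces a consecutive pair inside $r_2$ itself (violating non-consecutivity of the row) and is discarded at once. The heart of the argument is the single remaining non-shift candidate: although it may well be a legitimate non-consecutive permutation, one must show that it never extends to a valid $r_3$. I would propagate the same column-by-column constraints one row further and exhibit, in the surviving non-shift case, two distinct columns forced to demand the same value in $r_3$, a direct contradiction. The same matching analysis applied to a non-cyclable first row (one in which $1$ and $5$ are adjacent in the permutation) leaves at most one candidate for $r_2$, and a single further propagation step again forces a column conflict, so non-cyclable first rows are excluded.

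Once every transition $(r_i, r_{i+1})$ has been shown to be a shift by $\pm 1$, the direction of shift must be constant: a left shift immediately followed by a right shift (or conversely) would force $r_3 = r_1$, violating the Latin property. Hence $L$ is obtained by cycling a single cyclable non-consecutive permutation, which is the proposition. The main obstacle is the propagation step that rules out the non-shift candidate for $r_2$: this is the only place where the general pigeonholing does not already suffice, and one must track entries three rows deep. Fortunately, value reversal $v\mapsto 6-v$ and column reversal are honest symmetries of the problem, so the explicit computation need only be carried out for a small number of representative first rows.
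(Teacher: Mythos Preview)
Your plan is sound and, with the casework filled in, yields a correct proof. The paper takes a cleaner, less computational route. Like you, it first disposes of non-cyclable rows (checking that a non-cyclable $r_1$ forces a unique $r_2$ which then admits no $r_3$), but from there it invokes a single structural fact instead of enumerating matchings: in any \emph{cyclable} permutation of $\{1,\dots,5\}$ every value has exactly two admissible neighbours (since $1$ and $5$ are never adjacent, value $v$ can only sit next to $v\pm 2\bmod 5$), so any two consecutive entries determine the entire row or column. Once $r_1$ is fixed and cyclable, the entry $r_2[1]$ has two legal choices, each of which then forces all of $r_2$ to be the corresponding position-shift of $r_1$; the same two-entries-determine-a-column principle pins down the whole square after two rows, and since cycling is known to work, it must be the answer.

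Your four matchings are precisely this observation unpacked: two of them are the value-shifts $v\mapsto v\pm 2$ (which, for cyclable $r_1$, coincide with the position-shifts by $\pm 1$), and the other two are ``twisted'' variants that the paper's two-determine-all principle excludes without explicit enumeration. Your approach has the virtue of being entirely mechanical; the paper's is shorter and explains \emph{why} only the shifts survive. One phrasing point: ``two of those four matchings \emph{always} yield the left shift and the right shift of $r_1$'' is literally true only for cyclable $r_1$ --- for non-cyclable $r_1$ the value-shifts by $\pm 2$ are not position-shifts. You treat the non-cyclable case separately, so the logic is intact, but the wording should be tightened. Also note that your propagation argument uses three consecutive rows, so covering the transition $(r_4,r_5)$ requires invoking the top-to-bottom reflection you mention; make that step explicit.
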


\begin{proof}
First, we prove that the first row must be a cyclable permutation, that is, it can't be one of 24153, 31524, 35142, and 42513. These four permutations are equivalent, in a sense that we can get to one from another by reversing the string or by swapping numbers 1 with 5 and 2 with 4. Thus, without loss of generality, we can assume that the first row is 24153. That means the next row must end in 1 or 5. We have only four non-consecutive permutations like that: 35241, 24135, 42531, and 31425. Only one of them can serve as the second row: 42531. We can see that we can't build the third row after this.

Thus, the first row is a cyclable permutation. By the same reasoning, any row is a cyclable permutation. In such permutations, 1 and 5 are never next to each other. Thus, every number in such a square has two options for neighbors. 

In another observation, we notice that as soon as we place two numbers in a row/column, the rest of the row/column is forced. 

Suppose we filled the first row, then every number has potentially two placements in the second row. It either shifts by 1 to the left or the right, but if the numbers shift in different directions, we have a contradiction. Thus the second row is either the left or the right shift of the first row. After we filled in the first two rows, the rest of the square is uniquely defined. We know that cycling works, so it has to be it.
\end{proof}

Now to toroidal shy Latin squares. We just described all possible non-consecutive Latin squares of order 5, and they all happen to be toroidal. They are also modular.

\begin{corollary}
All 5 by 5 non-consecutive Latin squares are also toroidal and modular non-consecutive Latin squares.
\end{corollary}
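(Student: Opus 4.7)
The plan is to leverage Proposition~\ref{prop:5isCyclable} together with a close inspection of the structure of cyclable non-consecutive permutations of size $5$. First, I would observe that the ten cyclable non-consecutive permutations (the cyclic shifts of $13524$ and of $42531$) are exactly the linear readings of a single cyclic arrangement of $\{1,2,3,4,5\}$, namely the cycle $(1,3,5,2,4)$, traversed in either of the two possible directions. The cyclically adjacent pairs in this arrangement are $\{1,3\}, \{3,5\}, \{5,2\}, \{2,4\}, \{4,1\}$; in particular, no such pair is a pair of usual consecutive integers, and no such pair equals $\{1,5\}$.

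The next step is to read the toroidal non-consecutive and modular conditions off this cycle. Every row of the square is a cyclable non-consecutive permutation, so horizontally adjacent values within a row are cyclically adjacent in $(1,3,5,2,4)$; the first and last entries of each row are cyclically adjacent for the same reason, which hands us the horizontal toroidal wrap. Because consecutive rows differ by a shift of $\pm 1$, two vertically adjacent entries $L[i][j]$ and $L[i{+}1][j]$ take the form $c_s$ and $c_{s\pm 1}$ in the cycle, so they are again cyclically adjacent. Thus every orthogonal adjacency interior to the square and every horizontal wrap produces a cyclically adjacent pair, which by the previous paragraph is neither consecutive nor $\{1,5\}$.

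The remaining task is the vertical wrap from row $5$ back to row $1$. Here I would argue that the five shift directions must all agree: writing $s_0,s_1,\ldots,s_4$ for the starting positions of the rows inside the cycle, each increment $s_{i+1}-s_i$ is $\pm 1$, and the Latin-square column condition forces $\{s_0,\ldots,s_4\}$ to cover $\mathbb{Z}/5\mathbb{Z}$. A short case check shows that on the $5$-cycle the only length-$4$ walks visiting every vertex are the two monotone ones, so all shifts point the same way and the net shift $s_4-s_0$ equals $\pm 4\equiv \mp 1\pmod 5$. Consequently $L[4][j]$ and $L[0][j]$ are cyclically adjacent in $(1,3,5,2,4)$, yielding both the toroidal and the modular condition for the vertical wrap. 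The only genuine obstacle is this forced monotonicity of shifts; once that is established, all remaining checks are direct inspection of the cycle $(1,3,5,2,4)$.
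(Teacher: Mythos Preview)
Your argument is correct and is essentially the verification the paper leaves implicit: once Proposition~\ref{prop:5isCyclable} tells you the square is left- or right-cyclic on a cyclable permutation, all horizontal and vertical adjacencies (including the toroidal wraps) land on cyclically adjacent pairs in the cycle $(1,3,5,2,4)$, none of which are consecutive or equal to $\{1,5\}$. One small redundancy: your separate monotonicity argument for the shifts is unnecessary, since Proposition~\ref{prop:5isCyclable} already concludes that the square is $k$-cyclic with a single $k\in\{\pm 1\}$, so the net shift between rows $1$ and $5$ is automatically $\pm 4\equiv \mp 1\pmod 5$.
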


It makes us wonder whether non-toroidal (non-modular) shy squares exist. And they do, as can be seen in Figure~\ref{fig:ntnmnc}. 
\begin{figure}[ht!]
{
\begin{center}
\begin{tikzpicture}
\draw[step=0.5cm,color=black, line width=1.5] (-1,-1) grid (2,2);
\node at (-0.75, +1.75) {1};
\node at (-0.25, +1.75) {3};
\node at (+0.25, +1.75) {5};
\node at (+0.75, +1.75) {2};
\node at (+1.25, +1.75) {4};
\node at (+1.75, +1.75) {6};
\node at (-0.75, +1.25) {3};
\node at (-0.25, +1.25) {5};
\node at (+0.25, +1.25) {1};
\node at (+0.75, +1.25) {4};
\node at (+1.25, +1.25) {6};
\node at (+1.75, +1.25) {2};
\node at (-0.75,+0.75) {5};
\node at (-0.25,+0.75) {1};
\node at (+0.25,+0.75) {3};
\node at (+0.75,+0.75) {6};
\node at (+1.25,+0.75) {2};
\node at (+1.75, +0.75) {4};
\node at (-0.75,+0.25) {2};
\node at (-0.25,+0.25) {4};
\node at (+0.25,+0.25) {6};
\node at (+0.75,+0.25) {1};
\node at (+1.25,+0.25) {3};
\node at (+1.75, +0.25) {5};
\node at (-0.75, -0.25) {4};
\node at (-0.25, -0.25) {6};
\node at (+0.25, -0.25) {2};
\node at (+0.75, -0.25) {3};
\node at (+1.25, -0.25) {5};
\node at (+1.75, -0.25) {1};
\node at (-0.75,-0.75) {6};
\node at (-0.25,-0.75) {2};
\node at (+0.25,-0.75) {4};
\node at (+0.75,-0.75) {5};
\node at (+1.25,-0.75) {1};
\node at (+1.75, -0.75) {3};
\end{tikzpicture}
	\end{center}
	\caption{Not a toroidal or a modular non-consecutive Latin square.}
	\label{fig:ntnmnc}
	}
\end{figure}
This square is not a toroidal non-consecutive Latin square as, for example, 2 and 3 are at different ends of the second row. It is not a modular square as 1 and 6 are neighbors in the fourth row. It is interesting to note that our square can be divided into two pairs of smaller Latin squares, shown in Figure~\ref{fig:tss}, that form a checkerboard pattern.
\begin{figure}[ht!]
{
	\begin{center}
	\begin{tikzpicture}
	\draw[step=0.5cm,color=black, line width=1.5] (-1,-1) grid (0.5,0.5);
	\node at (-0.75,+0.25) {1};
	\node at (-0.25,+0.25) {3};
	\node at (+0.25,+0.25) {5};
	\node at (-0.75, -0.25) {3};
	\node at (-0.25, -0.25) {5};
	\node at (+0.25, -0.25) {1};
	\node at (-0.75,-0.75) {5};
	\node at (-0.25,-0.75) {1};
	\node at (+0.25,-0.75) {3};
	\end{tikzpicture}
	\quad \quad
	\begin{tikzpicture}
	\draw[step=0.5cm,color=black, line width=1.5] (-1,-1) grid (0.5,0.5);
	\node at (-0.75,+0.25) {2};
	\node at (-0.25,+0.25) {4};
	\node at (+0.25,+0.25) {6};
	\node at (-0.75, -0.25) {4};
	\node at (-0.25, -0.25) {6};
	\node at (+0.25, -0.25) {2};
	\node at (-0.75,-0.75) {6};
	\node at (-0.25,-0.75) {2};
	\node at (+0.25,-0.75) {4};
	\end{tikzpicture}
	\end{center}
	\caption{Two smaller Latin squares forming a checkerboard pattern.}
	\label{fig:tss}
	}
\end{figure}

\subsection{Large sizes}

Both toroidal and non-toroidal non-consecutive Latin squares of larger orders exist. It follows from the fact that the corresponding toroidal non-consecutive and non-toroidal non-consecutive permutations exist. After that, we can just cycle such permutations. The same is true for modular and non-modular Latin squares.

There are many other, not necessarily cyclic, such squares that exist. For example, consider a modular toroidal non-consecutive Latin square $N$ of order $n$ and any Latin square $M$ of order $m$. Consider the product of $M$ and $N$. We want to show that the result is a toroidal non-consecutive Latin square of order $mn$. The large square can be divided into $n$ by $n$ blocks, where each block equals to square $N$ incremented by a fixed number. That means, within a block, the numbers are not consecutive. 

Now we should look at boundaries between the blocks. It can fail to be non-consecutive only if the square $N$ has a row or column beginning with 1 and ending with $n$ or beginning with $n$ and ending with 1. By our assumption, the square $N$ is modular toroidal, so this can't happen.

\subsection{Non-consecutive by king's move}\label{sec:NonconbyKM}

We can also request that diagonal neighbors are non-consecutive integers. For order 1, such Latin squares trivially exist. For $1< n < 5$, such Latin squares do not exist because, as we showed in Section~\ref{sec:NC}, non-consecutive Latin squares do not exist for these sizes, so non-consecutive by king's move don't exist as well. 

Shy Latin squares of order 5 exist, but we showed that they are cyclic with a shift by 1 or $-1$. Hence, a number 3, which is not on the border, has to be adjacent to 1 and 5 in the same row, meaning it is diagonally adjacent to 2 or 4. Thus, these squares are not shy by king's move, so order 5 non-consecutive by king's move Latin squares don't exist.

What about size 6? Consider a row that starts with 1. The possibilities are 135246, 135264, 136425, 142536, 142635, 146253, 146352, 152463, 153624, 153642, 163524, and 164253. Notice that if a 2 is next to a 5, then the two numbers directly below those two can't be any of the other valid numbers because all of them are consecutive to 2 or 5. Thus, the numbers below have to be a 2 and a 5 in a different order. Then, we can't extend this square in any direction. Thus, the only possibilities that are left are 142635, 153624, and 153642. 

For 142635 and 153624, the number underneath the 6 can't be 6 and can't be consecutive to either of 2, 3, or 6. Thus, no number can be placed there, so this square can't be finished. For 153642, we can see that the only number that fits under 3 is 1. In addition, the only number that fits under 6 is 1. As we can't place 1 in two places at once, this square can't be finished. As a non-consecutive by king's move Latin square of size 6 must have a row that starts with a 1, we can see that it is impossible to create such a square.

One might find it odd to disallow diagonal neighbors to be consecutive and allow them to be the same. Latin squares that do not allow the same number to be a diagonal neighbor are called anti-king and are studied in Section~\ref{sec:anti-king}. So the natural question is to look for two types of Latin squares: non-consecutive by king's move and non-consecutive by king's move that is also anti-king.

From our discussion above, none of those squares exist for sizes 2, 3, 4, 5, and 6. 

\begin{theorem}
For all odd $n\ge 9$, there exists an $n$ by $n$ non-consecutive by king's move Latin square that is also anti-king. For $n=7$, a non-consecutive by king's move Latin square exists, but one that is also anti-king doesn't exist.
\end{theorem}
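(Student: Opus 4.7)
The proof has three parts: a cyclic linear construction for odd $n \geq 9$, a simpler cyclic construction for $n = 7$, and a localized counting argument for the $n = 7$ impossibility. For the odd $n \geq 9$ case, my plan is to define the square by $f(i,j) = ((2j - 4i) \bmod n) + 1$ for $0 \leq i, j \leq n-1$. Since $n$ is odd, both $2$ and $4$ are coprime to $n$, so every row and every column is a permutation of $\{1, \ldots, n\}$, giving a Latin square. Across each of the four king-move directions, the modular value-differences are $\pm 2$, $\pm 4$, $\pm 2$, and $\pm 6$; lifting to actual absolute differences of values in $\{1, \ldots, n\}$ gives $\min(k, n-k)$ for $k \in \{2, 4, 6\}$, all of which are at least $2$ once $n \geq 9$. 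Thus no two king-adjacent cells are equal or consecutive, establishing both the anti-king and the non-consecutive by king's move properties simultaneously.

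For $n = 7$, the $1$-cyclic square with first row $(1, 3, 5, 7, 2, 4, 6)$ serves as the non-consecutive by king's move example: orthogonal king-neighbors have absolute difference $2$ or $5$, anti-diagonal neighbors $3$ or $4$, and main-diagonal neighbors $0$---none equal to $1$. This square fails anti-king on the main diagonal, so the remaining task is to prove that this failure is unavoidable.

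For the $n = 7$ impossibility, the key idea is to focus on the value $4$. In any anti-king non-consecutive by king's move square of order $7$, every king-neighbor of a cell containing $4$ must differ from $4$ by at least $2$, hence carry a value in $\{1, 2, 6, 7\}$. Suppose a $4$ occupies an interior cell (so it has a full $3 \times 3$ king-neighborhood), and let $b$ denote the value directly above the $4$. The two cells $d, e$ immediately left and right of the $4$ are king-adjacent to both $4$ and $b$, which forces them out of $\{b-1, b, b+1\}$ and pins $\{d, e\}$ to $\{6, 7\}$ when $b \in \{1, 2\}$, and to $\{1, 2\}$ when $b \in \{6, 7\}$. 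Then the cell $a$ directly above $d$ must lie in $\{1, 2, 6, 7\}$, differ from $b$ by at least $2$ (same row), and differ from $d$ by at least $2$ (same column); a subcase check shows no value satisfies all three conditions, so the value $4$ cannot occupy any interior cell. A pigeonhole finish then completes the proof: each of the five interior columns $1, 2, 3, 4, 5$ contains exactly one $4$, each of those five $4$'s must sit in row $0$ or row $6$, yet rows $0$ and $6$ together contain only two $4$'s---a contradiction.

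The main obstacle is the interior case analysis for value $4$. The construction verifications are straightforward modular arithmetic and the pigeonhole step is immediate, but the case analysis requires keeping careful track of which cell-pairs are orthogonal versus diagonal, and how the non-consecutive and anti-king conditions interact. The useful structural observation is that $\{1, 2, 6, 7\}$ splits into the two consecutive pairs $\{1, 2\}$ and $\{6, 7\}$, and once $d, e$ are pinned to one pair, every cell king-adjacent to $d$ must lie in the opposite pair by non-consecutive, which quickly exhausts the available options.
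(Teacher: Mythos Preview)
Your proof is correct and follows essentially the same route as the paper's. Your formula $f(i,j)=((2j-4i)\bmod n)+1$ is exactly the paper's construction (first row stepping by $2$, rows shifted left by $2$) written explicitly, and your $1$-cyclic order-$7$ example is the mirror of the paper's left-cyclic one. For the $n=7$ impossibility, both arguments zoom in on an interior cell containing $4$, observe that all king-neighbors lie in $\{1,2,6,7\}$, and derive a contradiction from the split into the consecutive pairs $\{1,2\}$ and $\{6,7\}$; the paper pins down all four orthogonal neighbors and then blocks every diagonal cell, while you pin down only the top and the two horizontal neighbors and block the top-left cell, but the mechanism is identical. Your explicit pigeonhole step (five interior columns force five $4$'s into two boundary rows) is a welcome addition---the paper simply asserts that a non-border $4$ exists without justification.
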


\begin{proof}
Given an odd $n \ge 9$, we build a non-consecutive by king's move Latin square. We start the first row with 1 and then increase by 2 for the next element, wrapping around after $n$. That means $n$ is followed by 2, then 4, and so on: $135\ldots$. Because our number is odd, the first row is a permutation of the numbers 1 through $n$. Two neighboring numbers differ by $2$, and, hence, the first row is non-consecutive. By symmetry, we can use the first row that cycles not by 2 but by $n-2$.

To make a non-consecutive by king's move Latin square, we 1-cycle the first row. For example, the element $a$ will have its neighbors, including diagonal neighbors, come from the following list: $a-4$, $a-2$, $a$, $a+2$, and $a+4$. For $n>5$, numbers $\pm 2$ and $\pm 4$ are not equal to $\pm 1$ modulo $n$. Thus the square is a non-consecutive by king's move Latin square.

To make a non-consecutive by king's move Latin square that is also anti-king, we can cycle the rows by $-2$, that is by 2 to the left. For example, the element $a$ will have its neighbors, including diagonal neighbors, come from the following list: $a-6$, $a-4$, $a-2$, $a+2$, $a+4$, and $a+6$. None of them are equal to $a$. Moreover, for $n>7$, numbers $\pm 2$, $\pm 4$, and $\pm 6$ are not equal to $\pm 1$ modulo $n$. Thus, the Latin square is a non-consecutive square by king's move that is also anti-king. 

Now we prove that a non-consecutive by king's move Latin square that is also anti-king doesn't exist for order 7. Consider the number 4 that is placed in such a square, not on the border. Its neighbors must be from the set $\{1,2,6,7\}$. The cells that are orthogonally adjacent to the cell with 4 have to be all distinct as they are either in the same row/column or are neighbors by king's move. This means 1 and 2 are opposite each other, and the same is true for 6 or 7. In this arrangement, we can't place any more numbers from the set $\{1,2,6,7\}$ into any of the remaining 4 spots.
\end{proof}

A 7 by 7 non-consecutive by king's move Latin square is shown on the left in Figure~\ref{fig:nck9}. It was created by cycling the first row to the left by 1. A 9 by 9 non-consecutive by king's move Latin square that is also anti-king is shown on the right in Figure~\ref{fig:nck9}. Its first row is built by adding 7 (which equals $n-2$) modulo 9 to the previous element. Thus, the square is a 2-cyclic square.
\begin{figure}[ht!]
	{
    \begin{center}
        \begin{tikzpicture}
            \draw[step=0.5cm,color=black, line width=1.5] (0,0) grid (3.5,3.5);
            \node at (0.25, 3.25) {1};
            \node at (0.75, 3.25) {3};
            \node at (1.25, 3.25) {5};
            \node at (1.75, 3.25) {7};
            \node at (2.25, 3.25) {2};
            \node at (2.75, 3.25) {4};
            \node at (3.25, 3.25) {6};
            
            \node at (0.25, 2.75) {3};
            \node at (0.75, 2.75) {5};
            \node at (1.25, 2.75) {7};
            \node at (1.75, 2.75) {2};
            \node at (2.25, 2.75) {4};
            \node at (2.75, 2.75) {6};
            \node at (3.25, 2.75) {1};
            
            \node at (0.25, 2.25) {5};
            \node at (0.75, 2.25) {7};
            \node at (1.25, 2.25) {2};
            \node at (1.75, 2.25) {4};
            \node at (2.25, 2.25) {6};
            \node at (2.75, 2.25) {1};
            \node at (3.25, 2.25) {3};
            
            \node at (0.25, 1.75) {7};
            \node at (0.75, 1.75) {2};
            \node at (1.25, 1.75) {4};
            \node at (1.75, 1.75) {6};
            \node at (2.25, 1.75) {1};
            \node at (2.75, 1.75) {3};
            \node at (3.25, 1.75) {5};
            
            \node at (0.25, 1.25) {2};
            \node at (0.75, 1.25) {4};
            \node at (1.25, 1.25) {6};
            \node at (1.75, 1.25) {1};
            \node at (2.25, 1.25) {3};
            \node at (2.75, 1.25) {5};
            \node at (3.25, 1.25) {7};
            
            \node at (0.25, 0.75) {4};
            \node at (0.75, 0.75) {6};
            \node at (1.25, 0.75) {1};
            \node at (1.75, 0.75) {3};
            \node at (2.25, 0.75) {5};
            \node at (2.75, 0.75) {7};
            \node at (3.25, 0.75) {2};
            
            \node at (0.25, 0.25) {6};
            \node at (0.75, 0.25) {1};
            \node at (1.25, 0.25) {3};
            \node at (1.75, 0.25) {5};
            \node at (2.25, 0.25) {7};
            \node at (2.75, 0.25) {2};
            \node at (3.25, 0.25) {4};
        \end{tikzpicture}
\quad \quad 
	\begin{tikzpicture}
	\draw[step=0.5cm,color=black, line width=1.5] (-2.5,-2.5) grid (2,2);
	\node at (-2.25, +1.75){1};
	\node at (-2.25, +1.25){5};
	\node at (-2.25, +0.75){9};
	\node at (-2.25, +0.25){4};
	\node at (-2.25, -0.25){8};
	\node at (-2.25, -0.75){3};
	\node at (-2.25, -1.25){7};
	\node at (-2.25, -1.75){2};
	\node at (-2.25, -2.25){6};
	\node at (-1.75, +1.75){8};
	\node at (-1.75, +1.25){3};
	\node at (-1.75, +0.75){7};
	\node at (-1.75, +0.25){2};
	\node at (-1.75, -0.25){6};
	\node at (-1.75, -0.75){1};
	\node at (-1.75, -1.25){5};
	\node at (-1.75, -1.75){9};
	\node at (-1.75, -2.25){4};
	\node at (-1.25, +1.75){6};
	\node at (-1.25, +1.25){1};
	\node at (-1.25, +0.75){5};
	\node at (-1.25, +0.25){9};
	\node at (-1.25, -0.25){4};
	\node at (-1.25, -0.75){8};
	\node at (-1.25, -1.25){3};
	\node at (-1.25, -1.75){7};
	\node at (-1.25, -2.25){2};
	\node at (-0.75, +1.75){4};
	\node at (-0.75, +1.25){8};
	\node at (-0.75, +0.75){3};
	\node at (-0.75, +0.25){7};
	\node at (-0.75, -0.25){2};
	\node at (-0.75, -0.75){6};
	\node at (-0.75, -1.25){1};
	\node at (-0.75, -1.75){5};
	\node at (-0.75, -2.25){9};
	\node at (-0.25, +1.75){2};
	\node at (-0.25, +1.25){6};
	\node at (-0.25, +0.75){1};
	\node at (-0.25, +0.25){5};
	\node at (-0.25, -0.25){9};
	\node at (-0.25, -0.75){4};
	\node at (-0.25, -1.25){8};
	\node at (-0.25, -1.75){3};
	\node at (-0.25, -2.25){7};
	\node at (0.25, +1.75){9};
	\node at (0.25, +1.25){4};
	\node at (0.25, +0.75){8};
	\node at (0.25, +0.25){3};
	\node at (0.25, -0.25){7};
	\node at (0.25, -0.75){2};
	\node at (0.25, -1.25){6};
	\node at (0.25, -1.75){1};
	\node at (0.25, -2.25){5};
	\node at (0.75, +1.75){7};
	\node at (0.75, +1.25){2};
	\node at (0.75, +0.75){6};
	\node at (0.75, +0.25){1};
	\node at (0.75, -0.25){5};
	\node at (0.75, -0.75){9};
	\node at (0.75, -1.25){4};
	\node at (0.75, -1.75){8};
	\node at (0.75, -2.25){3};
	\node at (1.25, +1.75){5};
	\node at (1.25, +1.25){9};
	\node at (1.25, +0.75){4};
	\node at (1.25, +0.25){8};
	\node at (1.25, -0.25){3};
	\node at (1.25, -0.75){7};
	\node at (1.25, -1.25){2};
	\node at (1.25, -1.75){6};
	\node at (1.25, -2.25){1};
	\node at (1.75, +1.75){3};
	\node at (1.75, +1.25){7};
	\node at (1.75, +0.75){2};
	\node at (1.75, +0.25){6};
	\node at (1.75, -0.25){1};
	\node at (1.75, -0.75){5};
	\node at (1.75, -1.25){9};
	\node at (1.75, -1.75){4};
	\node at (1.75, -2.25){8};
	\end{tikzpicture}
	\end{center}
	\caption{Non-consecutive by king's move Latin squares of orders 7 and 9. The one of order 9 is also anti-king.}
	\label{fig:nck9}
	}
\end{figure}

We have discussed odd orders. What about even orders? Figure~\ref{fig:AncbykmLsnak} shows an order 8 non-consecutive by king's move Latin square, which is not an anti-king Latin square. This non-consecutive by king's move Latin square is made by right-cycling a row in which a number does not have a consecutive number within a cell toroidally within 2 cells of itself. Such row can be described as follows: the next number equals the previous number plus 3 modulo 8.
\begin{figure}[ht!]
{
	\begin{center}
	\begin{tikzpicture}
	\draw[step=0.5cm,color=black, line width=1.5] (0,0) grid (4,4);
	\node at (0.25,3.75) {1};
	\node at (0.75,3.75) {4};
	\node at (1.25,3.75) {7};
	\node at (1.75,3.75) {2};
	\node at (2.25,3.75) {5};
	\node at (2.75,3.75) {8};
	\node at (3.25,3.75) {3};
	\node at (3.75,3.75) {6};
	
	\node at (0.25,3.25) {6};
	\node at (0.75,3.25) {1};
	\node at (1.25,3.25) {4};
	\node at (1.75,3.25) {7};
	\node at (2.25,3.25) {2};
	\node at (2.75,3.25) {5};
	\node at (3.25,3.25) {8};
	\node at (3.75,3.25) {3};
	
	\node at (0.25,2.75) {3};
	\node at (0.75,2.75) {6};
	\node at (1.25,2.75) {1};
	\node at (1.75,2.75) {4};
	\node at (2.25,2.75) {7};
	\node at (2.75,2.75) {2};
	\node at (3.25,2.75) {5};
	\node at (3.75,2.75) {8};
	
	\node at (0.25,2.25) {8};
	\node at (0.75,2.25) {3};
	\node at (1.25,2.25) {6};
	\node at (1.75,2.25) {1};
	\node at (2.25,2.25) {4};
	\node at (2.75,2.25) {7};
	\node at (3.25,2.25) {2};
	\node at (3.75,2.25) {5};
	
	\node at (0.25,1.75) {5};
	\node at (0.75,1.75) {8};
	\node at (1.25,1.75) {3};
	\node at (1.75,1.75) {6};
	\node at (2.25,1.75) {1};
	\node at (2.75,1.75) {4};
	\node at (3.25,1.75) {7};
	\node at (3.75,1.75) {2};
	
	\node at (0.25,1.25) {2};
	\node at (0.75,1.25) {5};
	\node at (1.25,1.25) {8};
	\node at (1.75,1.25) {3};
	\node at (2.25,1.25) {6};
	\node at (2.75,1.25) {1};
	\node at (3.25,1.25) {4};
	\node at (3.75,1.25) {7};
	
	\node at (0.25,0.75) {7};
	\node at (0.75,0.75) {2};
	\node at (1.25,0.75) {5};
	\node at (1.75,0.75) {8};
	\node at (2.25,0.75) {3};
	\node at (2.75,0.75) {6};
	\node at (3.25,0.75) {1};
	\node at (3.75,0.75) {4};
	
	\node at (0.25,0.25) {4};
	\node at (0.75,0.25) {7};
	\node at (1.25,0.25) {2};
	\node at (1.75,0.25) {5};
	\node at (2.25,0.25) {8};
	\node at (2.75,0.25) {3};
	\node at (3.25,0.25) {6};
	\node at (3.75,0.25) {1};
	\end{tikzpicture}
	\end{center}
	\caption{A non-consecutive by king's move Latin square that is not anti-king.}
	\label{fig:AncbykmLsnak}
}
\end{figure}

Now we discuss the existence of shy by king's move Latin squares and shy by king's move Latin squares that are also anti-king exist for even orders. We prove the existence by using a construction similar to the one used to build the square in Figure~\ref{fig:AncbykmLsnak}.

\begin{theorem}
For an even $n$ greater than 6, there exists an $n$ by $n$ non-consecutive by king's move Latin square. For an even $n$ greater than 12, there exists an $n$ by $n$ non-consecutive by king's move Latin square that is also anti-king.
\end{theorem}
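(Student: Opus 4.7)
The plan is to generalize the cyclic construction illustrated for $n=8$ in Figure~\ref{fig:AncbykmLsnak}. Choose integers $k, s$ coprime to $n$. Take the first row to be the arithmetic progression $1, 1+k, 1+2k, \ldots, 1+(n-1)k$ modulo $n$, and build each subsequent row by right-cyclically shifting the previous row by $s$. A direct computation shows that if a cell holds the value $a$, then the cell to its right, below, below-right, and below-left hold $a+k$, $a-sk$, $a+(1-s)k$, and $a-(s+1)k$ modulo $n$ respectively. Hence the square is non-consecutive by king's move iff none of $k$, $sk$, $(s-1)k$, $(s+1)k$ is $\equiv \pm 1 \pmod n$, and it is additionally anti-king iff $s \not\equiv \pm 1 \pmod n$ (so that neither of $(s\pm 1)k$ is $\equiv 0$).

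For the first claim (non-consecutive by king's move for even $n > 6$), I take $s=1$. The conditions collapse to $k \not\equiv \pm 1 \pmod n$ and $2k \not\equiv \pm 1 \pmod n$. Since $n$ is even, $2k \bmod n$ is even while $\pm 1$ are odd residues, so the second condition is automatic; I need only a unit $k \in (\mathbb Z/n\mathbb Z)^{*}$ with $k \ne \pm 1 \pmod n$. Such a $k$ exists precisely when $\phi(n) \ge 4$, which holds for every even $n \ge 8$.

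For the anti-king claim I let $s$ vary as well. Since any unit modulo an even $n$ is odd, both $s-1$ and $s+1$ are even, so $(s\pm 1)k \bmod n$ is even and automatically not $\pm 1$. The remaining requirements reduce to $\gcd(k,n)=\gcd(s,n)=1$, $s \not\equiv \pm 1 \pmod n$, and $k \notin \{\pm 1, \pm s^{-1}\} \pmod n$. A quick check, using that $n/2$ is not a unit of $\mathbb Z/n\mathbb Z$ for even $n$, shows the four residues $\pm 1, \pm s^{-1}$ are pairwise distinct. So after picking any admissible $s$ (of which there are $\phi(n) - 2$), at least $\phi(n) - 4$ choices for $k$ remain. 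This is positive iff $\phi(n) \ge 6$, which holds for every even $n \ge 14$ since $\phi(8) = \phi(10) = \phi(12) = 4$. The only delicate point of the proof is this totient bookkeeping, which matches the bound $n > 12$ exactly; the rest is routine modular arithmetic.
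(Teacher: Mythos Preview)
Your proof is correct and follows essentially the same approach as the paper: the same cyclic construction with row step $k$ and inter-row shift (your $s$, the paper's $m$), the same reduction to the conditions $k,s$ coprime to $n$ and $k,sk,(s\pm1)k\not\equiv\pm1$, and the same Euler-totient counting to guarantee a valid choice. Your argument is in fact slightly tidier, since you verify that the four forbidden residues $\pm1,\pm s^{-1}$ are pairwise distinct, whereas the paper is content with the lower bound $\phi(n)-4$.
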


\begin{proof}
We start with a more restrictive construction of a non-consecutive Latin square that is also anti-king. We build such a Latin square, where the first row is made by successively adding $k$ modulo $n$ with a shift of $m$ between rows. For it to be a Latin square, the numbers $k$ and $m$ must be coprime with $n$. 

The neighborhood of any number $a$ in such a square is as follows:
\[
\begin{matrix}
a-k-mk & a-mk & a+k-mk \\
a-k & a & k \\
a-k+mk & a+mk & a+k+mk.
\end{matrix}
\]

For this square to be anti-king we need $mk\pm k \neq 0$ modulo $n$, which is equivalent to $m \neq \pm 1$ modulo $n$, since $k$ is coprime with $n$. For this square to be non-consecutive by king's move, we need that $k$, $mk$, and $mk\pm k$ not equal $\pm 1$ modulo $n$. The numbers $mk\pm k$ are even so they can't be equal $\pm 1$ modulo $n$, since $n$ is also even.

This means the final set of conditions that is enough for our construction is that $k$ and $m$ are coprime with $n$ and $k$, $m$ and $mk$ are not equal $\pm 1$ modulo $n$.

Now fix $m$ satisfying these constraints. There are $\phi(n)$ possibilities for $k$ to be coprime with $n$. The condition $k\neq\pm 1$ eliminates two of them. In addition, since $m$ is coprime with $n$, there is a unique $k$ for which $mk=1$ and also a unique $k$ for which $mk=-1$. We can eliminate these two cases as well. So the total number of choices for $k$ is at least $\phi(n)-4$. If $\phi(n)\ge 5$, then we can always choose a $k$.

The fact that for even $n > 12$, the Euler phi function exceeds 5 concludes the proof of the second part.

For the first part we notice that we can choose $m = \pm 1$ modulo $n$. After that the requirement for $k$ is that it is coprime with $n$ and $k \neq \pm 1$ modulo $n$. Thus, we can always find such a $k$ for $n > 6$, as for such even number the Euler phi functions exceeds 2.
\end{proof}

Combining the two above theorems, we see that a non-consecutive by king's move Latin square that is also anti-king exists for any $n$ greater than 12.

\section{Consecutive Latin squares}\label{sec:Consecutive}

A \textit{consecutive Latin square} is a Latin square where two orthogonally adjacent cells must contain consecutive numbers. We also call them \textit{nosy}. As opposed to shy squares, numbers in nosy squares want to keep their neighbors close.

\subsection{Consecutive Latin squares}

All Latin squares of orders 1 and 2 are consecutive.

\begin{theorem}
Consecutive Latin squares do not exist for order $n$, where $n>2$.
\end{theorem}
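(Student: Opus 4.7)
The plan is to show that the row constraint is already so rigid that only two permutations can appear as rows, after which a pigeonhole count on the first column kills everything for $n \geq 3$.

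First I would classify the consecutive permutations of $\{1,2,\ldots,n\}$, meaning permutations whose adjacent entries always differ by exactly $1$. Suppose such a permutation starts with an interior value $k$ with $1 < k < n$. The second entry must be $k-1$ or $k+1$; say it is $k+1$. Since no value repeats, the third entry cannot be $k$, so it must be $k+2$, and inductively we are forced to keep increasing until we hit $n$, at which point we are stuck (neither $n-1$ nor $n+1$ is available or in range). We have then used only the entries $k, k+1, \ldots, n$ and never visit $1,\ldots,k-1$, a contradiction. The symmetric argument rules out starting downward. Hence any consecutive permutation starts at $1$ or at $n$, and from that starting point the entire sequence is forced. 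The only consecutive permutations of $\{1,\ldots,n\}$ are therefore the identity $1,2,\ldots,n$ and its reverse $n,n-1,\ldots,1$.

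Next I would apply this to a consecutive Latin square $L$ of order $n \geq 3$. Every row is a consecutive permutation, so every row is one of the two permutations above. In particular, the first entry of every row is either $1$ or $n$. Since the square has $n \geq 3$ rows but only two possible values for the first column, the pigeonhole principle forces some value to appear at least twice in column $1$, contradicting the Latin square condition.

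The only real content is the row characterization; after that the contradiction is immediate, and no column-adjacency argument is needed. There is no substantive obstacle: the forcing argument for the row is a one-line induction, and the pigeonhole step is trivial for all $n \geq 3$.
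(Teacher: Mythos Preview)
Your proof is correct and follows essentially the same approach as the paper: first show that the only consecutive permutations of $\{1,\ldots,n\}$ are the increasing and decreasing ones, then conclude that an $n\times n$ Latin square with $n\ge 3$ cannot have all rows drawn from this two-element set. The paper asserts the row characterization without the detailed forcing argument you supply, and phrases the contradiction as ``only two possible rows to stack'' rather than as a pigeonhole on the first column, but the substance is identical.
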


\begin{proof}
There are only two possible ways to create a consecutive permutation of numbers 1 to $n$: to put them in increasing or decreasing order. A consecutive Latin square is formed by stacking complete consecutive rows on top of each other. But with only two possible rows to stack, the order of the square must be either 1 or 2, and no greater.
\end{proof}

Toroidal Latin squares are more restrictive, so toroidal consecutive Latin squares of order $n$, where $n > 2$, do not exist either.

\subsection{Modular consecutive Latin squares}

A \textit{modular consecutive Latin square} is a Latin square where two orthogonally adjacent cells must contain consecutive numbers, similar to a consecutive Latin square. However, unlike in a consecutive Latin square of order $n$, in a modular consecutive Latin square of order $n > 2$, numbers 1 and $n$ are considered consecutive.

Unlike consecutive Latin squares, modular consecutive Latin squares exist for any $n$. For example, a modular consecutive Latin square of size $n$ can be created by cycling the numbers in increasing (or decreasing) order from 1 to $n$ with a shift of 1 or $-1$.

\begin{proposition}
A modular consecutive Latin square of order $n > 4$ is either left- or right-cyclic.
\end{proposition}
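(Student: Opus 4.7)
My plan is to prove this in three steps. First, I would show that every row of any modular consecutive Latin square of order $n > 2$ is monotone modulo $n$, meaning it has the form $a, a+1, \ldots, a+(n-1)$ or $a, a-1, \ldots, a-(n-1)$ mod $n$. For three successive entries $x, y, z$ in a row, the consecutive condition forces $y \equiv x \pm 1$ and $z \equiv y \pm 1 \pmod n$; if the two signs differ then $z = x$, contradicting the Latin property within that row. So the direction is preserved along the row, and each row takes one of the two monotone forms. The identical argument applies to columns.

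Second, I would analyze the relation between two adjacent rows. Write row $i$ as $a_i + (j-1)\epsilon_i$ at column $j$, with direction $\epsilon_i \in \{\pm 1\}$. The vertical adjacency at column $j$ demands $(a_{i+1} - a_i) + (j-1)(\epsilon_{i+1} - \epsilon_i) \equiv \pm 1 \pmod n$. If $\epsilon_{i+1} = \epsilon_i$, this collapses to a single condition $a_{i+1} - a_i \equiv \pm 1$, and depending on the sign, row $i+1$ is either the left or the right one-step cyclic shift of row $i$. If instead $\epsilon_{i+1} = -\epsilon_i$, the left-hand side becomes $(a_{i+1} - a_i) + 2(j-1)\epsilon_{i+1}$, whose values as $j$ ranges over $1, \ldots, n$ form $n$ distinct residues mod $n$ when $n$ is odd (since $2$ is invertible) and exactly $n/2$ distinct residues when $n$ is even. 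For all of these to lie in the two-element set $\{1, -1\}$ we would need $n \le 2$ in the odd case and $n \le 4$ in the even case, both excluded by the hypothesis. Hence for $n > 4$ adjacent rows share the same monotonic direction, and row $i+1$ is a left or right one-step cyclic shift of row $i$.

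Third, I would show that the direction of these shifts is uniform across the whole square. If row $i+1$ were the left shift of row $i$ and row $i+2$ were the right shift of row $i+1$, then row $i+2$ would coincide with row $i$, forcing equal entries in every column and contradicting the Latin column property. Reversing the roles of left and right gives the same contradiction. Hence every row-to-row shift goes in a single common direction, making the entire square left-cyclic or right-cyclic, as claimed. The main obstacle is the modular count in the second step: the odd/even split and the sharp threshold $n \le 4$ have to be handled carefully, and it is exactly this threshold that forces the hypothesis $n > 4$, consistent with the two non-cyclic examples at $n = 4$ alluded to earlier in the paper.
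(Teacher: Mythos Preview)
Your proof is correct and complete. The decomposition differs from the paper's. The paper normalizes the first row to $1,2,\ldots,n$, case-splits on the first entry of the second row, and then uses the observation that two adjacent entries determine an entire row \emph{and} an entire column to propagate the structure to the whole square. You instead parameterize every row algebraically by $(a_i,\epsilon_i)$ and work purely row-to-row: a residue-counting argument rules out $\epsilon_{i+1}\neq\epsilon_i$, and then a cancellation argument rules out alternating shift directions. Your approach never invokes column monotonicity (only the raw Latin column condition in Step~3), and it makes the threshold $n>4$ fully transparent: it is exactly the point at which the $n/2$ residues in the even case can no longer fit inside $\{1,-1\}$. The paper's argument is shorter because the column-forcing observation handles Steps~2 and~3 simultaneously, but it leaves the reason why $n=4$ is exceptional somewhat implicit in the case analysis; your version exposes that cleanly.
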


\begin{proof}
In a modular nosy Latin square, each number has two possibilities for neighboring numbers. This means that for each row and column, we only need to fill in two adjacent numbers, after which all placements are forced: the numbers will go in an increasing or decreasing cycle. 

Without loss of generality, we assume that the first row is in increasing order. There are two options for the left-most cell of the second row: 2 or $n$. 

If we choose 2, then the next cell to the right can't be a 1, as this would force the third number in this row to be consecutive with both 1 and 3, and it cannot be 2 (for $n = 4$ the number can be 4, which creates an interesting exception). This means that the next cell to the right must be a 3. From here, the rest of the row is forced, meaning the rest of the columns are forced, yielding a left-cyclic Latin square.

If we instead choose $n$, then the second cell in the second row must be consecutive to both 2 and $n$. If $n > 4$, then there is only one option: 1. From here, the rest of the row is forced, meaning the rest of the columns are forced, yielding a right-cyclic Latin square.

Thus, all modular consecutive Latin squares for orders $n > 4$ are left- or right-cyclic.
\end{proof}

In an interesting corollary, we see that a modular nosy Latin square is uniquely determined by its three top-left entries.

\begin{corollary}
A modular consecutive Latin square of order more than 4 is uniquely determined by its entries with coordinates $(1,1)$, $(1,2)$, and $(2,1)$.
\end{corollary}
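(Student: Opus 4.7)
The plan is to combine the preceding proposition (that every modular consecutive Latin square of order $n>4$ is either left- or right-cyclic) with a short analysis of how much information the three entries $(1,1)$, $(1,2)$, and $(2,1)$ carry.

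First I would observe that the first row must be a modular consecutive permutation, and every such permutation of $1,\ldots,n$ is of the form $a, a\pm 1, a\pm 2, \ldots \pmod{n}$. Hence the first row is determined once we know its first entry $(1,1)$ (which fixes the starting value $a$) and its second entry $(1,2)$ (whose difference from $(1,1)$ is $\pm 1 \pmod n$ and thereby fixes whether the row increases or decreases). So the entries $(1,1)$ and $(1,2)$ uniquely determine the entire first row.

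Next I would invoke the proposition to conclude that the square is either left-cyclic or right-cyclic. Since the first row is already pinned down, the only remaining choice is the direction of the cyclic shift from row to row. That direction is determined by the entry $(2,1)$: comparing $(2,1)$ to $(1,1)$ tells us whether the second row is the left shift by one or the right shift by one of the first, and this single bit of information fixes the entire square since all subsequent rows are forced by the cyclic rule.

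No step here is really an obstacle — the proposition has done all the heavy lifting. The only subtlety to double-check is that the two possible values of $(2,1)$ (namely $(1,1)+1$ and $(1,1)-1 \pmod n$) are actually distinct for $n>4$, so that $(2,1)$ genuinely distinguishes the two cyclic directions; this is immediate since $n>2$. With that verified, the three entries $(1,1)$, $(1,2)$, $(2,1)$ together determine the first row and the shift direction, and hence the whole square, completing the proof.
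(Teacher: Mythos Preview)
Your proof is correct and follows essentially the same approach as the paper: the entries $(1,1)$ and $(1,2)$ determine the first row, and $(2,1)$ determines whether the shift is left or right. You add more detail than the paper's two-sentence proof (in particular the check that the two candidate values for $(2,1)$ are distinct), but the underlying argument is identical.
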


\begin{proof}
The entries with coordinates $(1,1)$ and $(1,2)$ uniquely define the first row. The entry with coordinates $(2,1)$ defines the left or right shift.
\end{proof}

This corollary produces its own corollary.

\begin{corollary}
For $n > 4$ there are $4n$ modular consecutive Latin squares.
\end{corollary}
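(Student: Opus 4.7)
The plan is to apply the previous corollary, which reduces the count to enumerating valid choices for the three entries at $(1,1)$, $(1,2)$, and $(2,1)$. Since each modular consecutive Latin square of order $n>4$ is uniquely determined by this triple, and conversely one must check that every admissible triple extends to such a square, the argument has two halves: an upper bound by counting triples, and a matching lower bound by constructing a square for each triple.

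For the upper bound, I would argue as follows. The entry $(1,1)$ can be any of the $n$ values. Since $(1,2)$ is orthogonally adjacent to $(1,1)$, it must be modularly consecutive to it, giving exactly $2$ choices. Similarly, $(2,1)$ lies directly below $(1,1)$ and so must be modularly consecutive to it; since $n>2$, both candidates differ from $(1,1)$, and the Latin square constraint within column $1$ is automatically satisfied. The cells $(1,2)$ and $(2,1)$ are neither in the same row, the same column, nor adjacent, so they impose no mutual constraint. This yields at most $n \cdot 2 \cdot 2 = 4n$ modular consecutive Latin squares.

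For the lower bound, I would verify that every such triple actually arises from a genuine modular consecutive Latin square. Given a choice of $(1,1) = a$ and a direction for $(1,2)$ (say $a+1$ or $a-1$ modulo $n$), the first row is the arithmetic progression $a, a\pm 1, a\pm 2, \ldots$ taken modulo $n$, which is a valid modular consecutive permutation. Given a choice of $(2,1)$ (either $a+1$ or $a-1$ modulo $n$), the proof of the proposition already shows that the rest of the square is forced and is either the left- or right-cyclic shift of the first row. A direct check that cycling by $\pm 1$ preserves the modular consecutivity of vertical neighbors (each cell differs from its vertical neighbor by exactly $\pm 1 \bmod n$) confirms that this construction always produces a valid modular consecutive Latin square.

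No step should present a serious obstacle; the main thing to be careful about is confirming that distinct triples yield distinct squares (immediate, since the triple sits inside the square) and that the two shift directions give genuinely different squares (which holds because the $(2,1)$ entries differ). Combining both directions, the count is exactly $4n$, completing the proof.
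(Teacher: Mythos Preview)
Your proof is correct and follows essentially the same approach as the paper: count the choices for the three determining entries $(1,1)$, $(1,2)$, $(2,1)$ and multiply. Your version is more careful than the paper's terse argument in that you explicitly separate the upper bound (at most $4n$ triples) from the lower bound (each triple actually extends to a valid square via the cyclic construction), whereas the paper leaves the existence direction implicit.
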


\begin{proof}
There are $n$ ways to choose the entry with coordinates $(1,1)$. After that there are 2 ways to choose each of the entries $(1,2)$ and $(2,1)$.
\end{proof}

Interestingly, it is possible to make a non-cyclic modular consecutive Latin square of order 4. Similarly to our discussion above, we can show that such a square is uniquely defined by its top-left 2 by 2 square. Without loss of generality, let the first row of this small square be 12. Then the second row could be 21, 23, 41, and 43. Rows 23 and 41 correspond to cyclic squares. Therefore, there are exactly two different modular consecutive Latin squares of order 4 with the first row being in order that are not cyclic. These squares are shown in Figure~\ref{fig:NCMC4}.
\begin{figure}[ht!]
{
	\begin{center}
	\begin{tikzpicture}
	\draw[step=0.5cm,color=black, line width=1.5] (-1,-1) grid (1,1);
	\node at (-0.75,+0.75) {1};
	\node at (-0.25,+0.75) {2};
	\node at (+0.25,+0.75) {3};
	\node at (+0.75,+0.75) {4};
	\node at (-0.75,+0.25) {4};
	\node at (-0.25,+0.25) {3};
	\node at (+0.25,+0.25) {2};
	\node at (+0.75,+0.25) {1};
	\node at (-0.75, -0.25) {3};
	\node at (-0.25, -0.25) {4};
	\node at (+0.25, -0.25) {1};
	\node at (+0.75, -0.25) {2};
	\node at (-0.75,-0.75) {2};
	\node at (-0.25,-0.75) {1};
	\node at (+0.25,-0.75) {4};
	\node at (+0.75,-0.75) {3};
	\end{tikzpicture}
\quad \quad 
	\begin{tikzpicture}
	\draw[step=0.5cm,color=black, line width=1.5] (-1,-1) grid (1,1);
	\node at (-0.75,+0.75) {1};
	\node at (-0.25,+0.75) {2};
	\node at (+0.25,+0.75) {3};
	\node at (+0.75,+0.75) {4};
	\node at (-0.75,+0.25) {2};
	\node at (-0.25,+0.25) {1};
	\node at (+0.25,+0.25) {4};
	\node at (+0.75,+0.25) {3};
	\node at (-0.75, -0.25) {3};
	\node at (-0.25, -0.25) {4};
	\node at (+0.25, -0.25) {1};
	\node at (+0.75, -0.25) {2};
	\node at (-0.75,-0.75) {4};
	\node at (-0.25,-0.75) {3};
	\node at (+0.25,-0.75) {2};
	\node at (+0.75,-0.75) {1};
	\end{tikzpicture}
	\end{center}
	\caption{Non-cyclic modular consecutive Latin squares of order 4.}
	\label{fig:NCMC4}
	}
\end{figure}

All consecutive modular Latin squares of order more than 4 are toroidal because they are cyclic. These two exceptional examples for $n=4$ are also toroidal. Thus, all consecutive modular Latin squares are also toroidal.

\subsection{Special number 5}

Consider a modular toroidal case. For any number, two numbers are allowed to be its neighbors. Now we want to reminisce about the non-consecutive case when $n=5$. In this case, each number again has precisely two numbers that are allowed to be neighbors. Therefore, we can put them in cycle 13524. Thus, we can consider a bijection on numbers that turns row 12345 into 13524. This bijection between numbers creates a bijection between modular toroidal shy Latin squares and modular toroidal nosy Latin squares of order 5.

\section{Anti-chess Latin squares}\label{sec:AC}

We want to consider an unspecified chess piece. We give it a special name \textit{chiece}. This word is a portmanteau word combining the two words chess and piece. This section was inspired by popular Sudoku variation known as anti-knight Sudoku.

An \textit{anti-chiece} Latin square is a Latin square such that any two cells separated by a chiece's move can't be the same. For example, anti-rook Latin squares are just Latin squares. Also, anti-queen and anti-bishop Latin squares are the same.

If we have an anti-chiece Latin square, then we can relabel the numbers in any order and still have such a square. Thus, without loss of generality, we can assume that the first row is in order. Trivially, a Latin square of order 1 is an anti-chiece Latin square.

\subsection{Anti-knight Latin squares}

\textit{Anti-knight} Latin squares are Latin squares such that any two numbers separated by a knight's move are distinct.

Latin squares of sizes 1 and 2 have to be anti-knight, as no two cells in such squares are a knight's move apart. Size-3 anti-knight Latin squares do not exist. The good news: we already saw an anti-knight Latin square of order 4 in Figure~\ref{fig:LE4}: the lexicographically earliest Latin square of order 4 is also an anti-knight Latin square.

Another example of an anti-knight Latin square is shown in Figure~\ref{fig:Sudoku}. It forms a Sudoku grid, which means that it can be divided into four 2 by 2 boxes, each containing distinct numbers. We also see that it is formed by placing two identical 2 by 2 Latin squares in a checkerboard pattern.
\begin{figure}[ht!]
	{
	\begin{center}
	\begin{tikzpicture}
	\draw[step=0.5cm,color=black, line width=1.5] (-1,-1) grid (1,1);
	\node at (-0.75,+0.75) {1};
	\node at (-0.25,+0.75) {2};
	\node at (+0.25,+0.75) {3};
	\node at (+0.75,+0.75) {4};
	\node at (-0.75,+0.25) {4};
	\node at (-0.25,+0.25) {3};
	\node at (+0.25,+0.25) {2};
	\node at (+0.75,+0.25) {1};
	\node at (-0.75, -0.25) {3};
	\node at (-0.25, -0.25) {4};
	\node at (+0.25, -0.25) {1};
	\node at (+0.75, -0.25) {2};
	\node at (-0.75,-0.75) {2};
	\node at (-0.25,-0.75) {1};
	\node at (+0.25,-0.75) {4};
	\node at (+0.75,-0.75) {3};
	\end{tikzpicture}
	\end{center}
	\caption{An anti-knight Latin square that forms a Sudoku.}
	\label{fig:Sudoku}
	}
\end{figure}

One might ask whether all Latin squares of size 4 are anti-knight. The answer is no: Figure~\ref{fig:NAK} shows an example.
\begin{figure}[ht!]
	{
	\begin{center}
	\begin{tikzpicture}
	\draw[step=0.5cm,color=black, line width=1.5] (-1,-1) grid (1,1);
	\node at (-0.75,+0.75) {1};
	\node at (-0.25,+0.75) {2};
	\node at (+0.25,+0.75) {3};
	\node at (+0.75,+0.75) {4};
	\node at (-0.75,+0.25) {3};
	\node at (-0.25,+0.25) {4};
	\node at (+0.25,+0.25) {1};
	\node at (+0.75,+0.25) {2};
	\node at (-0.75, -0.25) {2};
	\node at (-0.25, -0.25) {3};
	\node at (+0.25, -0.25) {4};
	\node at (+0.75, -0.25) {1};
	\node at (-0.75,-0.75) {4};
	\node at (-0.25,-0.75) {1};
	\node at (+0.25,-0.75) {2};
	\node at (+0.75,-0.75) {3};
	\end{tikzpicture}
	\end{center}
	\caption{Not an anti-knight Latin square.}
	\label{fig:NAK}
	}
\end{figure}

We want to find how many anti-knight Latin squares of order 4 exist. All possible anti-knight Latin squares with the first row in order are in Figure~\ref{fig:AllAK}. 
\begin{figure}[ht!]
{
	\begin{center}
	\begin{tikzpicture}
	\draw[step=0.5cm,color=black, line width=1.5] (-1,-1) grid (1,1);
	\node at (-0.75,+0.75) {1};
	\node at (-0.25,+0.75) {2};
	\node at (+0.25,+0.75) {3};
	\node at (+0.75,+0.75) {4};
	\node at (-0.75,+0.25) {2};
	\node at (-0.25,+0.25) {1};
	\node at (+0.25,+0.25) {4};
	\node at (+0.75,+0.25) {3};
	\node at (-0.75, -0.25) {3};
	\node at (-0.25, -0.25) {4};
	\node at (+0.25, -0.25) {1};
	\node at (+0.75, -0.25) {2};
	\node at (-0.75,-0.75) {4};
	\node at (-0.25,-0.75) {3};
	\node at (+0.25,-0.75) {2};
	\node at (+0.75,-0.75) {1};
	\end{tikzpicture}
\quad \quad
	\begin{tikzpicture}
	\draw[step=0.5cm,color=black, line width=1.5] (-1,-1) grid (1,1);
	\node at (-0.75,+0.75) {1};
	\node at (-0.25,+0.75) {2};
	\node at (+0.25,+0.75) {3};
	\node at (+0.75,+0.75) {4};
	\node at (-0.75,+0.25) {4};
	\node at (-0.25,+0.25) {1};
	\node at (+0.25,+0.25) {2};
	\node at (+0.75,+0.25) {3};
	\node at (-0.75, -0.25) {3};
	\node at (-0.25, -0.25) {4};
	\node at (+0.25, -0.25) {1};
	\node at (+0.75, -0.25) {2};
	\node at (-0.75,-0.75) {2};
	\node at (-0.25,-0.75) {3};
	\node at (+0.25,-0.75) {4};
	\node at (+0.75,-0.75) {1};
	\end{tikzpicture}
\quad \quad
	\begin{tikzpicture}
	\draw[step=0.5cm,color=black, line width=1.5] (-1,-1) grid (1,1);
	\node at (-0.75,+0.75) {1};
	\node at (-0.25,+0.75) {2};
	\node at (+0.25,+0.75) {3};
	\node at (+0.75,+0.75) {4};
	\node at (-0.75,+0.25) {2};
	\node at (-0.25,+0.25) {3};
	\node at (+0.25,+0.25) {4};
	\node at (+0.75,+0.25) {1};
	\node at (-0.75, -0.25) {3};
	\node at (-0.25, -0.25) {4};
	\node at (+0.25, -0.25) {1};
	\node at (+0.75, -0.25) {2};
	\node at (-0.75,-0.75) {4};
	\node at (-0.25,-0.75) {1};
	\node at (+0.25,-0.75) {2};
	\node at (+0.75,-0.75) {3};
	\end{tikzpicture}
\quad \quad
	\begin{tikzpicture}
	\draw[step=0.5cm,color=black, line width=1.5] (-1,-1) grid (1,1);
	\node at (-0.75,+0.75) {1};
	\node at (-0.25,+0.75) {2};
	\node at (+0.25,+0.75) {3};
	\node at (+0.75,+0.75) {4};
	\node at (-0.75,+0.25) {4};
	\node at (-0.25,+0.25) {3};
	\node at (+0.25,+0.25) {2};
	\node at (+0.75,+0.25) {1};
	\node at (-0.75, -0.25) {3};
	\node at (-0.25, -0.25) {4};
	\node at (+0.25, -0.25) {1};
	\node at (+0.75, -0.25) {2};
	\node at (-0.75,-0.75) {2};
	\node at (-0.25,-0.75) {1};
	\node at (+0.25,-0.75) {4};
	\node at (+0.75,-0.75) {3};
	\end{tikzpicture}
	\end{center}
	\caption{All anti-knight Latin squares of order 4 up to relabeling.}
	\label{fig:AllAK}
	}
\end{figure}

There are four Latin squares. We can notice that the diagonal is either the same number or two pairs of the same numbers. We can also notice that in all of the above squares, the third row is the same. This is not a coincidence. In an anti-knight Latin square of order 4, the third row is uniquely defined after the first row is given. We leave it to the readers to explain it. It follows that there are 96 anti-knight Latin squares of order 4.

Interestingly, in size 5, there are fewer anti-knight Latin squares with a fixed first row: there are only two of them. The first example is in Figure~\ref{fig:AK5}.
\begin{figure}[ht!]
	{
	\begin{center}
	\begin{tikzpicture}
	\draw[step=0.5cm,color=black, line width=1.5] (-1,-1) grid (1.5,1.5);
	\node at (-0.75, +1.25) {1};
	\node at (-0.25, +1.25) {2};
	\node at (+0.25, +1.25) {3};
	\node at (+0.75, +1.25) {4};
	\node at (+1.25, +1.25) {5};
	\node at (-0.75,+0.75) {2};
	\node at (-0.25,+0.75) {3};
	\node at (+0.25,+0.75) {4};
	\node at (+0.75,+0.75) {5};
	\node at (+1.25,+0.75) {1};
	\node at (-0.75,+0.25) {3};
	\node at (-0.25,+0.25) {4};
	\node at (+0.25,+0.25) {5};
	\node at (+0.75,+0.25) {1};
	\node at (+1.25,+0.25) {2};
	\node at (-0.75, -0.25) {4};
	\node at (-0.25, -0.25) {5};
	\node at (+0.25, -0.25) {1};
	\node at (+0.75, -0.25) {2};
	\node at (+1.25, -0.25) {3};
	\node at (-0.75,-0.75) {5};
	\node at (-0.25,-0.75) {1};
	\node at (+0.25,-0.75) {2};
	\node at (+0.75,-0.75) {3};
	\node at (+1.25,-0.75) {4};
	\end{tikzpicture}
	\end{center}
	\caption{Anti-knight Latin square of order 5.}
	\label{fig:AK5}
	}
\end{figure}

In it, the second row is the cycling of the first row by 1 to the left. Similarly, we can cycle to the right. And this is it. It follows that any 5 by 5 Latin square can be built by starting with any permutation and constructing a left-cyclic or a right-cyclic square out of it. It follows that there 240 anti-knight Latin squares of order 5.

As we know all anti-knight Latin squares of order 5, we can know that the following proposition is true. 

\begin{proposition}
All 5 by 5 non-consecutive Latin squares are also anti-knight Latin squares.
\end{proposition}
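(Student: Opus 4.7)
The plan is to leverage Proposition~\ref{prop:5isCyclable}, which tells us that every $5\times 5$ non-consecutive Latin square is produced by cycling a cyclable non-consecutive permutation $P = (p_1, p_2, p_3, p_4, p_5)$ either left by $1$ or right by $1$ each row. So the task reduces to checking the anti-knight property for just these two families of squares, rather than inspecting each of the $14$ cyclable permutations individually.

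First I would set up coordinates. For a left-cyclic square, the entry at position $(i,j)$ equals $p_{\,((j+i-2) \bmod 5) + 1}$, so moving from $(i,j)$ to $(i+a, j+b)$ changes the index into $P$ by $a+b$ modulo $5$. For a right-cyclic square, the entry at $(i,j)$ equals $p_{\,((j-i) \bmod 5) + 1}$, and the same move changes the index by $b-a$ modulo $5$. Two cells are forced to hold the same symbol if and only if the relevant index shift is $0$ modulo $5$.

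Next I would enumerate the eight knight displacements $(a,b) \in \{(\pm 1, \pm 2), (\pm 2, \pm 1)\}$ and compute $a+b$ and $b-a$ for each. In every case both quantities lie in the set $\{-3,-1,1,3\}$, and none of these is congruent to $0$ modulo $5$. Hence for either cycling direction the two cells at a knight's-move apart always contain distinct entries of $P$, and the square is anti-knight.

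There is no real obstacle here; the only thing to be careful about is keeping the sign conventions for left-cyclic versus right-cyclic shifts straight, and making sure the index arithmetic is done modulo $5$ (so that "wrap-around" knight pairs across the borders behave the same as internal ones, noting that the ordinary square, not the toroidal one, is what we need — but since the modular condition only makes the claim easier, checking it mod $5$ is sufficient). The proof is then just a one-line observation that $\{\pm 1, \pm 3\} \cap 5\mathbb{Z} = \emptyset$.
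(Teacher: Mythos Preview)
Your argument is correct. Both the paper and you start from Proposition~\ref{prop:5isCyclable} to reduce to left- or right-cyclic squares, but the final step is handled differently. The paper gives no formal proof of this proposition; it simply remarks that since all order-$5$ anti-knight Latin squares have already been enumerated (they are exactly the $240$ left- or right-cyclic squares), the non-consecutive ones---being left- or right-cyclic by Proposition~\ref{prop:5isCyclable}---are automatically among them. Your route is more self-contained: you verify directly, via the index computation $a+b,\, b-a \in \{\pm 1,\pm 3\} \not\equiv 0 \pmod 5$, that every $1$-cyclic square of order $5$ satisfies the anti-knight condition, without relying on the full classification of anti-knight squares. This is arguably cleaner, since it uses only the direction ``cyclic $\Rightarrow$ anti-knight'' rather than the converse as well; it is also closely related to the checkerboard-coloring argument the paper uses later to show anti-knight squares exist for all $n>3$.
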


Now to any order.

\begin{theorem}
Anti-knight Latin squares exist for any order $n > 3$.
\end{theorem}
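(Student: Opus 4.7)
The plan is to give an explicit cyclic construction that works uniformly for every $n \ge 4$. Define the $n$ by $n$ array $L$ by $L_{i,j} = ((i+j-2) \bmod n) + 1$, so that each row is the left cyclic shift by one of the row above it. Since $\gcd(1,n) = 1$, this is the standard construction of a Latin square, as already used in Section~\ref{sec:def}; no extra work is needed for that part.

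The key step is to verify the anti-knight property. I plan to note that two cells $(i,j)$ and $(i',j')$ carry the same symbol in $L$ precisely when $(i - i') + (j - j') \equiv 0 \pmod n$. For a knight's move, the displacement $(i' - i,\, j' - j)$ lies in $\{(\pm 1, \pm 2),\, (\pm 2, \pm 1)\}$, so the coordinate sum $(i' - i) + (j' - j)$ takes values in the set $\{\pm 1, \pm 3\}$. The next step is the simple observation that for $n \ge 4$ none of $\pm 1, \pm 3$ is congruent to $0$ modulo $n$, which immediately rules out any pair of cells a knight's move apart sharing a symbol.

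The only real obstacle is this modular non-vanishing check, and it is sharp: the argument fails precisely when $n = 3$, since there $\pm 3 \equiv 0 \pmod{3}$. This matches the earlier remark in the section that no anti-knight Latin square of order $3$ exists. Together with the worked examples of orders $4$ and $5$ already exhibited in this section (which one can also verify come out as special cases of the same construction, even though the listed order-$4$ square was chosen differently for illustration), the cyclic construction above settles the theorem for all $n > 3$.
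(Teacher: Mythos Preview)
Your proof is correct and uses the same $1$-cyclic construction as the paper. The paper verifies the anti-knight property via a checkerboard coloring observation (a knight's move always switches color, while equal entries lie along a diagonal and hence share a color), whereas you argue directly that equal entries force $(i-i')+(j-j')\equiv 0\pmod n$ and that a knight's displacement gives $(i-i')+(j-j')\in\{\pm 1,\pm 3\}$, which is nonzero modulo $n$ once $n\ge 4$; these are two phrasings of the same underlying fact, and your modular formulation makes the failure at $n=3$ especially transparent.
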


\begin{proof}
We prove this by construction. Pick any sequence of numbers for the first row. Then build a cyclic square by shifting the first row by 1. Thus, the same number in the next row is diagonally adjacent. Assuming the square is colored in a checkerboard manner, a knight moves from one color to another, while cells on the same diagonals are of the same color. It follows that this square is anti-knight.
\end{proof}

The anti-knight Latin squares of size 5 we constructed are also toroidal anti-knight Latin squares. This connection works for any cyclic square that is shifted by 1. That means toroidal anti-knight Latin squares of any order do exist.

\subsection{Anti-queen Latin squares}

Since the traditional Latin square rules (each row and column must have one of each number) apply, anti-bishop Latin squares and anti-queen Latin squares are equivalent. In this paper, we prefer the term \textit{anti-queen}.

It is easy to check that anti-queen Latin squares do not exist for $1 < n < 4$. For $n = 4$, let us assume that the first row is 1234. Then there is only one way to place the second row as 3412. Then we can't fill in the third row. 

The question of finding such squares is related to the famous question of non-attacking queens. Indeed, all entries corresponding to the same number $k$ in an anti-queen Latin square are positions of $k$ non-attacking queens.

The latter question on the regular 8 by 8 chessboard is known as the eight queens puzzle, which is the problem of placing eight chess queens on an 8 by 8 chessboard so that no two queens can attack each other. It is known that there are 92 different solutions to this puzzle. Up to geometric transformations, there are 12.

There is a sequence in the OEIS devoted to non-attacking queens: A000170: Number of ways of placing $n$ non-attacking queens on an $n$ by $n$ board. 1, 1, 0, 0, 2, 10, 4, 40, 92, 352, 724, 2680, 14200, 73712, and so on. The sequence starts from the zero-sized board. 

For $n=5$, we have 10 ways to place non-attacking queens. Is it possible to combine 5 of them into one square? Yes, it is possible. If our first row is in order, there are two anti-queen Latin squares: they are shown in Figure~\ref{fig:AQ5}.

\begin{figure}[ht!]
{
	\begin{center}
	\begin{tikzpicture}
	\draw[step=0.5cm,color=black, line width=1.5] (-1,-1) grid (1.5,1.5);
	\node at (-0.75, +1.25) {1};
	\node at (-0.25, +1.25) {2};
	\node at (+0.25, +1.25) {3};
	\node at (+0.75, +1.25) {4};
	\node at (+1.25, +1.25) {5};
	\node at (-0.75,+0.75) {3};
	\node at (-0.25,+0.75) {4};
	\node at (+0.25,+0.75) {5};
	\node at (+0.75,+0.75) {1};
	\node at (+1.25,+0.75) {2};
	\node at (-0.75,+0.25) {5};
	\node at (-0.25,+0.25) {1};
	\node at (+0.25,+0.25) {2};
	\node at (+0.75,+0.25) {3};
	\node at (+1.25,+0.25) {4};
	\node at (-0.75, -0.25) {2};
	\node at (-0.25, -0.25) {3};
	\node at (+0.25, -0.25) {4};
	\node at (+0.75, -0.25) {5};
	\node at (+1.25, -0.25) {1};
	\node at (-0.75,-0.75) {4};
	\node at (-0.25,-0.75) {5};
	\node at (+0.25,-0.75) {1};
	\node at (+0.75,-0.75) {2};
	\node at (+1.25,-0.75) {3};
	\end{tikzpicture}
\quad \quad
	\begin{tikzpicture}
	\draw[step=0.5cm,color=black, line width=1.5] (-1,-1) grid (1.5,1.5);
	\node at (-0.75, +1.25) {1};
	\node at (-0.25, +1.25) {2};
	\node at (+0.25, +1.25) {3};
	\node at (+0.75, +1.25) {4};
	\node at (+1.25, +1.25) {5};
	\node at (-0.75,+0.75) {4};
	\node at (-0.25,+0.75) {5};
	\node at (+0.25,+0.75) {1};
	\node at (+0.75,+0.75) {2};
	\node at (+1.25,+0.75) {3};
	\node at (-0.75,+0.25) {2};
	\node at (-0.25,+0.25) {3};
	\node at (+0.25,+0.25) {4};
	\node at (+0.75,+0.25) {5};
	\node at (+1.25,+0.25) {1};
	\node at (-0.75, -0.25) {5};
	\node at (-0.25, -0.25) {1};
	\node at (+0.25, -0.25) {2};
	\node at (+0.75, -0.25) {3};
	\node at (+1.25, -0.25) {4};
	\node at (-0.75,-0.75) {3};
	\node at (-0.25,-0.75) {4};
	\node at (+0.25,-0.75) {5};
	\node at (+0.75,-0.75) {1};
	\node at (+1.25,-0.75) {2};
	\end{tikzpicture}
	\end{center}
	\caption{Anti-queen Latin squares of order 5.}
	\label{fig:AQ5}
	}
\end{figure}

Both of them are formed by cycling the previous row. For the first square, the shift is 2 to the left, and for the second, it is 2 to the right. Both of them are also toroidal anti-queen Latin squares.

The set of anti-queen Latin squares is a subset of the anti-king Latin squares, as king's moves are a subset of queen's moves.

Looking at larger $n$, we want to study the toroidal case as it is more restrictive. We have the following theorem.

\begin{theorem}
Shifting a permutation of length $n$ by $k$ in either direction produces a toroidal anti-queen Latin square of size $n$ if and only if $n$ is coprime with $k$, $k-1$, and $k+1$.
\end{theorem}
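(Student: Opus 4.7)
My plan is to reduce everything to modular arithmetic on the positions of a single value. Place the given permutation $(a_0, a_1, \ldots, a_{n-1})$ in the top row and, without loss of generality, assume we are cycling rightward by $k$ each row; the leftward case corresponds to $k \mapsto -k$, which I will handle at the end. With rows and columns indexed $0$ through $n-1$, the entry in cell $(i,j)$ is $a_{j - ik \bmod n}$, so for any value $v = a_c$, the set of cells containing $v$ is exactly
\[
S_v = \{(i,\, c + ik \bmod n) : 0 \le i < n\},
\]
a toroidal arithmetic progression of ``slope'' $k$. The three geometric properties (same row, same column, same toroidal diagonal) become three simple congruence conditions on the difference $d = i_1 - i_2$.

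Next I would dispatch the Latin-square condition. Each row is a cyclic shift of a permutation, so distinctness within rows is automatic. Two cells of $S_v$ share a column iff $(i_1 - i_2)k \equiv 0 \pmod{n}$, and a nonzero $d \in \mathbb{Z}/n\mathbb{Z}$ with $dk \equiv 0$ exists iff $\gcd(k,n) > 1$. Hence the construction is a Latin square iff $\gcd(k,n) = 1$.

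For the anti-queen condition, I would use the fact that two cells $(r_1,c_1)$ and $(r_2,c_2)$ share a toroidal diagonal iff $r_1 - r_2 \equiv \pm(c_1 - c_2) \pmod{n}$ (the two signs correspond to the NW--SE and NE--SW diagonals wrapping around the torus). Substituting the formula for $S_v$, this becomes $d \equiv \pm dk \pmod{n}$, i.e., $d(k \mp 1) \equiv 0 \pmod{n}$. A nonzero $d$ satisfying one of these exists iff $\gcd(k-1,n) > 1$ or $\gcd(k+1,n) > 1$, so absence of diagonal conflicts among same-value cells is equivalent to $\gcd(k-1,n) = \gcd(k+1,n) = 1$.

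Combining the three equivalences yields that the cyclic construction produces a toroidal anti-queen Latin square iff $n$ is coprime with each of $k-1$, $k$, and $k+1$, independent of the starting permutation (since the positions of each value depend only on $k$ and $n$). The ``either direction'' clause is handled by noting that the three gcd conditions are invariant under $k \mapsto -k$, since $\gcd(-k,n) = \gcd(k,n)$ and $\gcd(-k \pm 1, n) = \gcd(k \mp 1, n)$. There is no real obstacle; the only subtle step is correctly defining a toroidal diagonal as a level set of $i-j$ or $i+j$ modulo $n$, which is what produces two independent congruences $d(k-1) \equiv 0$ and $d(k+1) \equiv 0$ rather than a single one.
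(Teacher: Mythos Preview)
Your proposal is correct and follows essentially the same route as the paper: track the column positions of a fixed value as the arithmetic progression $c+ik \pmod n$, then translate the column and toroidal-diagonal constraints into the congruences $dk\equiv 0$ and $d(k\pm 1)\equiv 0$, whose solvability with $d\not\equiv 0$ is governed by $\gcd(k,n)$, $\gcd(k-1,n)$, $\gcd(k+1,n)$. Your explicit remarks on the $k\mapsto -k$ symmetry and on independence from the starting permutation are welcome refinements, but the underlying argument is the same as the paper's.
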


\begin{proof}
Suppose the first row is fixed. Suppose also that we number cells in a row starting with 0. Consider the number in the $j$-th cell of the first row. Shifting by $k$ puts this number in place $j +km\mod n$ in row $m$. 

For the resulting square to be a Latin square it is necessary and sufficient that $j +km_1 \neq j + km_2 \mod n$ for any pair of rows $m_1$ and $m_2$. Equivalently, $km_1 \neq km_2 \mod n$. Hence, the shifting method creates a Latin square from the first row if and only if $n$ is coprime with $k$.

The queen in the cell $j +km_1$ of row $m_1$ attacks pieces in row $m_2$ that are in the positions $j +km_1 \pm (m_2-m_1)\mod n$. Thus, we need $j +km_1 \pm (m_2-m_1) \neq j + km_2 \mod n$. This is equivalent to $(k \pm 1)(m2-m1) \neq 0 \mod n$ for any pair $m_1$ and $m_2$. This is equivalent to $(k \pm 1)$ being coprime with $n$.
\end{proof}

\begin{corollary}
A toroidal anti-queen Latin square produced by shifting method exists if and only if $n$ is coprime with 6.
\end{corollary}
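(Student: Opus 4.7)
The plan is to reduce the corollary directly to the theorem: a toroidal anti-queen Latin square by shifting exists iff there is some $k$ with $n$ coprime to each of $k-1$, $k$, and $k+1$, equivalently $\gcd(n, (k-1)k(k+1)) = 1$. So the task is to show that such a $k$ exists iff $\gcd(n,6) = 1$.

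For the forward direction, I would observe that the product $(k-1)k(k+1)$ of three consecutive integers is always divisible by $6$: at least one of the three is even, and exactly one is a multiple of $3$. Consequently, if $n$ shares a factor with $2$ or with $3$, then $n$ shares a factor with $(k-1)k(k+1)$ for every $k$, and no shift can work. This forces $\gcd(n,6)=1$.

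For the reverse direction, I would simply exhibit $k=2$. Then $k-1 = 1$, $k = 2$, $k+1 = 3$, and assuming $\gcd(n,6)=1$ we have $\gcd(n,1)=\gcd(n,2)=\gcd(n,3)=1$, so the theorem's conditions are satisfied and the $2$-cyclic construction gives a toroidal anti-queen Latin square.

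There is no real obstacle here: the only slightly subtle point is noticing that the three relevant gcd conditions amount to $\gcd(n,(k-1)k(k+1))=1$ and that three consecutive integers always cover all residues mod both $2$ and $3$. Once that is in hand, both directions are one-line arguments.
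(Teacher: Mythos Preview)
Your proof is correct and follows essentially the same approach as the paper: both argue that among $k-1$, $k$, $k+1$ there is always a multiple of $2$ and a multiple of $3$ (so $n$ must be coprime to $6$), and both exhibit $k=2$ for the converse.
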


\begin{proof}
Out of numbers $k-1$, $k$, and $k+1$, at least one is divisible by 2, and at least one is divisible by 3. Thus, if $n$ is not coprime with 6, then the shifting method doesn't produce an anti-queen Latin square. If $n$ is coprime with 6, then numbers 1, 2, 3 are coprime with $n$, and shifting by 2 produces the desired square.
\end{proof}

\subsection{Anti-king Latin squares}\label{sec:anti-king}

The trivial Latin square of order 1 is an anti-king Latin square. For size 2, an anti-king Latin square doesn't exist. We can't have an anti-king Latin square in size 3 either, as the number in the center forbids all other placements of the same number. For size 4, suppose the first row is 1234, then the second row is uniquely defined as 3412. After that, we can't place 1 and 4 anywhere in the third row. So we can't have it in size 4.

For larger sizes, the anti-king Latin squares exist as the anti-queen Latin squares are also anti-king Latin squares, and the latter squares exist. 

We can expect that there are more anti-king Latin squares than anti-queen Latin squares, but in size 5, these two sets are the same. Thus, the only two anti-king Latin squares of order 5 with the first row 12345 are shown in Figure~\ref{fig:AQ5}. The two squares can be transformed one into another through relabeling 1 as 5, 2 as 4, 4 as 2, and 5 as 1 and reflecting the square with respect to the middle column.

Figure~\ref{fig:akNotaq9} shows on the left an example of an anti-king but not anti-queen Latin square. It is formed by shifting the first row by 2 to the right. The diagonal stretching from the bottom left to the top right repeats the pattern 963, thus these numbers see each other by queen's move. The example of non-consecutive by king's move Latin square in Figure~\ref{sec:NonconbyKM} is also an example of an anti-king Latin square that is not an anti-queen Latin square, as the main diagonal stretching from the bottom left to the top right repeats the pattern 693. On the right in Figure~\ref{fig:akNotaq9}
we see a smaller example of order 6. This square is not cyclic. It is formed by cycling the previous row by 2 to the left, except the fourth row is formed by the cycling of the third row by 3.
\begin{figure}[ht!]
{
	\begin{center}
	\begin{tikzpicture}
	\draw[step=0.5cm,color=black, line width=1.5] (-2.5,-2.5) grid (2,2);
	\node at (-2.25, +1.75){1};
	\node at (-2.25, +1.25){8};
	\node at (-2.25, +0.75){6};
	\node at (-2.25, +0.25){4};
	\node at (-2.25, -0.25){2};
	\node at (-2.25, -0.75){9};
	\node at (-2.25, -1.25){7};
	\node at (-2.25, -1.75){5};
	\node at (-2.25, -2.25){3};
	\node at (-1.75, +1.75){2};
	\node at (-1.75, +1.25){9};
	\node at (-1.75, +0.75){7};
	\node at (-1.75, +0.25){5};
	\node at (-1.75, -0.25){3};
	\node at (-1.75, -0.75){1};
	\node at (-1.75, -1.25){8};
	\node at (-1.75, -1.75){6};
	\node at (-1.75, -2.25){4};
	\node at (-1.25, +1.75){3};
	\node at (-1.25, +1.25){1};
	\node at (-1.25, +0.75){8};
	\node at (-1.25, +0.25){6};
	\node at (-1.25, -0.25){4};
	\node at (-1.25, -0.75){2};
	\node at (-1.25, -1.25){9};
	\node at (-1.25, -1.75){7};
	\node at (-1.25, -2.25){5};
	\node at (-0.75, +1.75){4};
	\node at (-0.75, +1.25){2};
	\node at (-0.75, +0.75){9};
	\node at (-0.75, +0.25){7};
	\node at (-0.75, -0.25){5};
	\node at (-0.75, -0.75){3};
	\node at (-0.75, -1.25){1};
	\node at (-0.75, -1.75){8};
	\node at (-0.75, -2.25){6};
	\node at (-0.25, +1.75){5};
	\node at (-0.25, +1.25){3};
	\node at (-0.25, +0.75){1};
	\node at (-0.25, +0.25){8};
	\node at (-0.25, -0.25){6};
	\node at (-0.25, -0.75){4};
	\node at (-0.25, -1.25){2};
	\node at (-0.25, -1.75){9};
	\node at (-0.25, -2.25){7};
	\node at (0.25, +1.75){6};
	\node at (0.25, +1.25){4};
	\node at (0.25, +0.75){2};
	\node at (0.25, +0.25){9};
	\node at (0.25, -0.25){7};
	\node at (0.25, -0.75){5};
	\node at (0.25, -1.25){3};
	\node at (0.25, -1.75){1};
	\node at (0.25, -2.25){8};
	\node at (0.75, +1.75){7};
	\node at (0.75, +1.25){5};
	\node at (0.75, +0.75){3};
	\node at (0.75, +0.25){1};
	\node at (0.75, -0.25){8};
	\node at (0.75, -0.75){6};
	\node at (0.75, -1.25){4};
	\node at (0.75, -1.75){2};
	\node at (0.75, -2.25){9};
	\node at (1.25, +1.75){8};
	\node at (1.25, +1.25){6};
	\node at (1.25, +0.75){4};
	\node at (1.25, +0.25){2};
	\node at (1.25, -0.25){9};
	\node at (1.25, -0.75){7};
	\node at (1.25, -1.25){5};
	\node at (1.25, -1.75){3};
	\node at (1.25, -2.25){1};
	\node at (1.75, +1.75){9};
	\node at (1.75, +1.25){7};
	\node at (1.75, +0.75){5};
	\node at (1.75, +0.25){3};
	\node at (1.75, -0.25){1};
	\node at (1.75, -0.75){8};
	\node at (1.75, -1.25){6};
	\node at (1.75, -1.75){4};
	\node at (1.75, -2.25){2};
	\end{tikzpicture}
\quad \quad
\begin{tikzpicture}
\draw[step=0.5cm,color=black, line width=1.5] (-1,-1) grid (2,2);
\node at (-0.75, +1.75) {1};
\node at (-0.25, +1.75) {2};
\node at (+0.25, +1.75) {3};
\node at (+0.75, +1.75) {4};
\node at (+1.25, +1.75) {5};
\node at (+1.75, +1.75) {6};
\node at (-0.75, +1.25) {3};
\node at (-0.25, +1.25) {4};
\node at (+0.25, +1.25) {5};
\node at (+0.75, +1.25) {6};
\node at (+1.25, +1.25) {1};
\node at (+1.75, +1.25) {2};
\node at (-0.75,+0.75) {5};
\node at (-0.25,+0.75) {6};
\node at (+0.25,+0.75) {1};
\node at (+0.75,+0.75) {2};
\node at (+1.25,+0.75) {3};
\node at (+1.75, +0.75) {4};
\node at (-0.75,+0.25) {2};
\node at (-0.25,+0.25) {3};
\node at (+0.25,+0.25) {4};
\node at (+0.75,+0.25) {5};
\node at (+1.25,+0.25) {6};
\node at (+1.75, +0.25) {1};
\node at (-0.75, -0.25) {4};
\node at (-0.25, -0.25) {5};
\node at (+0.25, -0.25) {6};
\node at (+0.75, -0.25) {1};
\node at (+1.25, -0.25) {2};
\node at (+1.75, -0.25) {3};
\node at (-0.75,-0.75) {6};
\node at (-0.25,-0.75) {1};
\node at (+0.25,-0.75) {2};
\node at (+0.75,-0.75) {3};
\node at (+1.25,-0.75) {4};
\node at (+1.75, -0.75) {5};
\end{tikzpicture}
	\end{center}
	\caption{Anti-king, but not anti-queen Latin squares of orders 9 and 6.}
	\label{fig:akNotaq9}
	}
\end{figure}

Now we show that anti-king, but not anti-queen Latin squares exist for composite large $n$.

\begin{theorem}
An anti-king, but not anti-queen Latin square exists for composite $n > 6$.
\end{theorem}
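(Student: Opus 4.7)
The plan is to reuse the $k$-cyclic shift construction studied in the anti-queen section: take the first row to be $1,2,\dots,n$ and let each subsequent row be a right cyclic shift by $k$ of the previous one. For each composite $n>6$, I will choose $k$ so that the resulting square is Latin, anti-king, and \emph{not} anti-queen.

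Paralleling the earlier toroidal anti-queen theorem, the key observation is that two equal symbols separated by row offset $m$ occupy columns $km \pmod n$ apart. Thus the square is Latin iff $\gcd(k,n)=1$; it is anti-king iff additionally $k\not\equiv \pm 1\pmod n$, since the only way two equal symbols can end up king-adjacent is to sit in adjacent rows with column offset $\pm 1$; and it \emph{fails} to be anti-queen as soon as $\gcd(k+1,n)>1$ or $\gcd(k-1,n)>1$, because setting $d=n/\gcd(k+1,n)$ (respectively $d=n/\gcd(k-1,n)$) forces the cells $(0,d)$ and $(d,0)$ (respectively $(0,0)$ and $(d,d)$) to share a symbol and produces an honest, non-toroidal diagonal attack. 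So the problem reduces to finding $k$ that is coprime to $n$, different from $\pm 1$ modulo $n$, and such that $k-1$ or $k+1$ shares a factor with $n$.

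When $n$ is even and $n\ge 8$, we have $\phi(n)\ge 4$, so among the residues coprime to $n$ there is some $k$ with $k\not\equiv\pm 1\pmod n$. Any such $k$ is odd (being coprime to the even $n$), which forces $k-1$ to be even and hence $\gcd(k-1,n)\ge 2$ for free.

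The odd composite case is the step I expect to be the main obstacle, since we lose that automatic factor of $2$ in $k\pm 1$. My plan there is to let $p$ be the smallest prime factor of $n$ and set $k=p-1$. Every prime factor of $p-1$ is smaller than $p$ and hence cannot divide $n$, so $\gcd(k,n)=1$; trivially $p\mid k+1$, so $\gcd(k+1,n)\ge p>1$; and because $n$ is composite, $p\le\sqrt{n}$, giving $2\le k=p-1\le\sqrt{n}-1<n-1$, so $k\not\equiv\pm 1\pmod n$. Combining the even and odd cases completes the proof, and I note that the order-$9$ example in Figure~\ref{fig:akNotaq9} is exactly this construction with $p=3$, $k=2$.
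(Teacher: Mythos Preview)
Your proof is correct and follows essentially the same route as the paper: build a $k$-cyclic square, require $\gcd(k,n)=1$ and $k\not\equiv\pm1$, and force $k\pm1$ to share a factor with $n$; split into even and odd composite $n$. The only cosmetic difference is that in the odd case you take $k=p-1$ while the paper takes $k=p+1$ (both with $p$ the least prime factor of $n$); amusingly, your choice is the one that actually matches the order-$9$ example in Figure~\ref{fig:akNotaq9}.
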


\begin{proof}
We construct a cyclic anti-king Latin square. The shift $k$ has to be different from plus or minus one to make it anti-king. Moreover, it has to be coprime with $n$, so it is a Latin square. Also, $k-1$ or $k+1$ has to have a common divisor with $n$ so that one of the diagonals has repeated numbers. 

If $n$ is even, we take a shift $k$ between 2 and $n-2$ and coprime with $n$. Thus, it will be a Latin square, and since $k\pm 1$ are even, it will not be anti-queen, and since it is not $\pm 1$, it is anti-king. 

For odd $n$, let $p$ be the smallest prime factor. Then $p+1$ is coprime with $n$ since it is coprime with $p$ and all larger primes. So using $k=p+1$ as a shift creates a Latin square. Also $1 < k < n-1$, so the square is anti-king. We also know that $k-1$ is a proper divisor of $n$, and the resulting square is not anti-queen.
\end{proof}

By construction in the theorem above, we see that we can't construct a cyclic anti-king Latin square that is not anti-queen for prime orders.

\section{Chess Latin squares}\label{sec:Chess}

For a chess piece chiece, let us call a \textit{chiece} Latin square a Latin square such that for each cell in a square, there is a cell containing the same number a chiece's move apart. A rook Latin square is an oxymoron: a Latin square does not allow any numbers to repeat in the same row or column, but a rook Latin square requires numbers to repeat in the same row or column. As before, queen Latin squares are the same as bishop Latin squares; we call them \textit{bishop} Latin squares.

Trivially, chiece Latin squares of order 1 do not exist.

\subsection{Knight Latin squares}

In a \textit{knight} Latin square, for each number in a particular cell, the same number exists in a cell which is a knight's move apart.

For $n=1$ or 2, a knight Latin square is impossible: no two cells are a knight's move apart. For $n=3$, it is also impossible: no cell is a knight's move apart from the center cell.

The lexicographically earliest 4 by 4 knight Latin square is the Latin square we previously saw in Figure~\ref{fig:LC4}. Interestingly, swapping the last two rows yields another knight Latin square.

Knight Latin squares are similar to strict knight's move Latin squares mentioned in Section~\ref{sec:intro}. The difference is that in strict knight's move Latin squares, any two cells which contain the same number are joined by a sequence of knight's moves using cells containing that number. In contrast, in knight Latin squares, for each number in a square cell, there is the same number a knight's move apart, but two cells with the same number do not have to be joined by a sequence of knight moves. Hence, all strict knight's move Latin squares are knight Latin squares, but not all knight Latin squares are strict knight's move Latin squares.

A rather large knight Latin square that is not a strict knight's move Latin square is shown in Figure~\ref{fig:kLsNotskLs}.

\begin{figure}[ht!]
{
    \begin{center}
        \begin{tikzpicture}
            \draw[step=0.5cm,color=black, line width=1.5] (0,0) grid (5,5);
            \node at (0.25, 4.75) {1};
            \node at (0.75, 4.75) {2};
            \node at (1.25, 4.75) {3};
            \node at (1.75, 4.75) {4};
            \node at (2.25, 4.75) {5};
            \node at (2.75, 4.75) {6};
            \node at (3.25, 4.75) {7};
            \node at (3.75, 4.75) {8};
            \node at (4.25, 4.75) {9};
            \node at (4.75, 4.75) {10};
            
            \node at (0.25, 4.25) {5};
            \node at (0.75, 4.25) {4};
            \node at (1.25, 4.25) {1};
            \node at (1.75, 4.25) {3};
            \node at (2.25, 4.25) {2};
            \node at (2.75, 4.25) {10};
            \node at (3.25, 4.25) {9};
            \node at (3.75, 4.25) {6};
            \node at (4.25, 4.25) {8};
            \node at (4.75, 4.25) {7};
            
            \node at (0.25, 3.75) {4};
            \node at (0.75, 3.75) {3};
            \node at (1.25, 3.75) {2};
            \node at (1.75, 3.75) {5};
            \node at (2.25, 3.75) {1};
            \node at (2.75, 3.75) {9};
            \node at (3.25, 3.75) {8};
            \node at (3.75, 3.75) {7};
            \node at (4.25, 3.75) {10};
            \node at (4.75, 3.75) {6};
            
            \node at (0.25, 3.25) {2};
            \node at (0.75, 3.25) {5};
            \node at (1.25, 3.25) {4};
            \node at (1.75, 3.25) {1};
            \node at (2.25, 3.25) {3};
            \node at (2.75, 3.25) {7};
            \node at (3.25, 3.25) {10};
            \node at (3.75, 3.25) {9};
            \node at (4.25, 3.25) {6};
            \node at (4.75, 3.25) {8};
            
            \node at (0.25, 2.75) {3};
            \node at (0.75, 2.75) {1};
            \node at (1.25, 2.75) {5};
            \node at (1.75, 2.75) {2};
            \node at (2.25, 2.75) {4};
            \node at (2.75, 2.75) {8};
            \node at (3.25, 2.75) {6};
            \node at (3.75, 2.75) {10};
            \node at (4.25, 2.75) {7};
            \node at (4.75, 2.75) {9};
            
            \node at (0.25, 2.25) {6};
            \node at (0.75, 2.25) {7};
            \node at (1.25, 2.25) {8};
            \node at (1.75, 2.25) {9};
            \node at (2.25, 2.25) {10};
            \node at (2.75, 2.25) {1};
            \node at (3.25, 2.25) {2};
            \node at (3.75, 2.25) {3};
            \node at (4.25, 2.25) {4};
            \node at (4.75, 2.25) {5};
            
            \node at (0.25, 1.75) {10};
            \node at (0.75, 1.75) {9};
            \node at (1.25, 1.75) {6};
            \node at (1.75, 1.75) {8};
            \node at (2.25, 1.75) {7};
            \node at (2.75, 1.75) {5};
            \node at (3.25, 1.75) {4};
            \node at (3.75, 1.75) {1};
            \node at (4.25, 1.75) {3};
            \node at (4.75, 1.75) {2};
            
            \node at (0.25, 1.25) {9};
            \node at (0.75, 1.25) {8};
            \node at (1.25, 1.25) {7};
            \node at (1.75, 1.25) {10};
            \node at (2.25, 1.25) {6};
            \node at (2.75, 1.25) {4};
            \node at (3.25, 1.25) {3};
            \node at (3.75, 1.25) {2};
            \node at (4.25, 1.25) {5};
            \node at (4.75, 1.25) {1};
        
            \node at (0.25, 0.75) {7};
            \node at (0.75, 0.75) {10};
            \node at (1.25, 0.75) {9};
            \node at (1.75, 0.75) {6};
            \node at (2.25, 0.75) {8};
            \node at (2.75, 0.75) {2};
            \node at (3.25, 0.75) {5};
            \node at (3.75, 0.75) {4};
            \node at (4.25, 0.75) {1};
            \node at (4.75, 0.75) {3};
            
            \node at (0.25, 0.25) {8};
            \node at (0.75, 0.25) {6};
            \node at (1.25, 0.25) {10};
            \node at (1.75, 0.25) {7};
            \node at (2.25, 0.25) {9};
            \node at (2.75, 0.25) {3};
            \node at (3.25, 0.25) {1};
            \node at (3.75, 0.25) {5};
            \node at (4.25, 0.25) {2};
            \node at (4.75, 0.25) {4};
        \end{tikzpicture}
    \end{center}
	\caption{A knight Latin square that is not a strict knight's move Latin square.}
	\label{fig:kLsNotskLs}
}
\end{figure}

Interestingly, this square is the product of the 5 by 5 strict knight's move Latin square shown on the left of Figure~\ref{fig:skm5} and a 2 by 2 Latin square.

We can prove that all 5 by 5 knight Latin squares are strict knight's move Latin squares.

\begin{proposition}
All knight Latin squares of size 5 are also strict knight's move Latin squares.
\end{proposition}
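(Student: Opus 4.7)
For a $5 \times 5$ knight Latin square and a fixed symbol $a$, the five cells containing $a$ form a permutation matrix. Let $H_a$ denote the induced subgraph of the $5 \times 5$ knight graph on these five cells: the knight Latin square property is exactly that $H_a$ has no isolated vertex, while strictness is that $H_a$ is connected. So the task reduces to showing that every five-cell permutation of the $5 \times 5$ board whose induced knight graph has no isolated vertex is in fact connected. The plan is a parity reduction followed by a short case analysis.

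A knight move flips the parity of $r+c$, so the $5 \times 5$ knight graph is bipartite between \emph{whites} ($r+c$ even) and \emph{blacks} ($r+c$ odd). For any permutation $\sigma$ of $\{1,\ldots,5\}$, $\sum_{i=1}^{5}(i+\sigma(i)) = 30$ is even, so the number of blacks among the five cells is even; the color split is thus $(5,0)$, $(3,2)$, or $(1,4)$. I would dispatch $(5,0)$ by noting that no knight edges exist within one color class, so every cell would be isolated --- contradicting the knight square assumption. In the $(1,4)$ split the four majority cells have no edges among themselves, so the no-isolated-vertex condition forces each of them to be knight-adjacent to the lone minority cell, making $H_a$ the star $K_{1,4}$, visibly connected. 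That leaves the $(3,2)$ split as the only real case.

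For $(3,2)$, assume for contradiction that $H_a$ is disconnected. A bipartite graph on parts of sizes $3$ and $2$ without isolated vertices must be a ``cherry plus edge'': a black $b_1$ joined to two whites $w_1, w_2$, together with a disjoint edge $w_3 - b_2$, and no cross-edges. I would then refine the coloring by the \emph{supercolor} $(r \bmod 2, c \bmod 2)$, which a knight move changes in exactly one coordinate. Because the five cells occupy each row and each column exactly once, the cells in even rows fill $\{2,4\}$ and those in odd rows fill $\{1,3,5\}$, and similarly for columns; combined with the $(3,2)$ color split, this forces the supercolor profile to be exactly one cell at each of $(0,0)$, $(0,1)$, $(1,0)$ and two at $(1,1)$. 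The cherry and edge adjacencies and non-adjacencies then translate into short integer conditions on row and column differences, such as $|c_A - c_D| = 1$ or $(|r_B - r_D|, |c_B - c_D|) = (1,2)$.

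The main obstacle is closing out this last step without an explosion of subcases. I would exploit the eight-fold dihedral symmetry of the $5 \times 5$ board together with the $w_1 \leftrightarrow w_2$ interchange to cut the analysis to a handful of representatives --- essentially two choices of which white plays the edge role $w_3$ (the one of supercolor $(0,0)$ or one of those at $(1,1)$) and two choices of which black is the cherry center. In each configuration, the edge requirement $w_3 \sim b_2$ pins certain row and column indices, while the required non-adjacency of $w_3$ to $b_1$ and of $w_1, w_2$ to $b_2$ either forces two of the five cells into the same row or column (contradicting the permutation structure) or forces a knight-adjacency across the two components (contradicting disconnection). A short case check then finishes the argument.
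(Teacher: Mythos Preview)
Your approach is sound and genuinely different from the paper's. The paper argues geometrically: a disconnected $H_a$ with no isolated vertex must split as a $3$-component and a $2$-component; the $2$-component is a single knight edge and hence lies in two consecutive rows (or columns), which cannot be middle rows without isolating a vertex of the $3$-component; taking the $2$-component in rows $4,5$, the paper enumerates (up to reflection) the three possible connected $3$-components in rows $1$--$3$ and verifies by inspection that the two leftover column positions in rows $4,5$ always contain a knight-neighbor of the $3$-component.

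Your bipartite reduction is more structural. Splitting by the parity of $r+c$ kills the $(5,0)$ and $(1,4)$ color profiles with no enumeration at all, and your identification of the $(3,2)$ case as ``cherry plus disjoint edge'' is exactly the $3+2$ component split the paper uses, reached by a cleaner route. The supercolor refinement is a nice device the paper does not exploit, and your derivation of the forced profile (one cell at each of $(0,0),(0,1),(1,0)$ and two at $(1,1)$) is correct. What you trade away is concreteness at the end: the paper finishes with three pictures and a glance, whereas your residual $(3,2)$ casework---pinning the edge $w_3\!-\!b_2$, then playing the non-adjacencies against the permutation constraint---is only sketched. It does close, but ``short'' is optimistic; written out, it is comparable in length to the paper's figure check. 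If you carry that step through explicitly, your proof stands independently, with the bonus that it explains \emph{why} two of the three parity cases are trivial.
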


\begin{proof}
Suppose a knight Latin square that is not a strict knight's move Latin square exists in order 5. Then, at least one number is broken into two chains, of 3 and 2 cells, that can be traversed with knight's moves such that no cell in the first chain is a knight's move apart from any cell in the second chain.

The chain of two cells must occupy either two consecutive rows or two consecutive columns. Without loss of generality, we can assume that the 2-chain occupies two consecutive rows. These two rows can't be in the middle, as otherwise, they will break the 3-chain, so without loss of generality, we can assume that the 2-chain occupies the last two rows.

Let $x$ represent the cells that are part of the 3-chain and $y$ represent possible leftover placements for the cells in the 2-chain. Up to reflection with respect to the middle vertical line, all possibilities are shown in Figure~\ref{fig:2-3chains}.
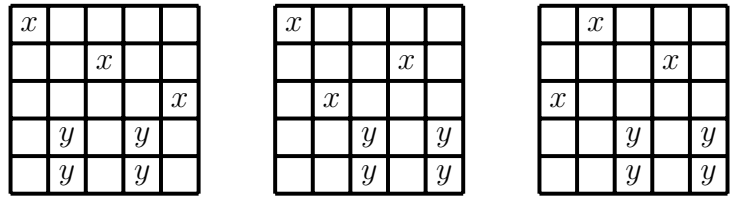
\begin{figure}[ht!]
	\begin{center}
		\begin{tikzpicture}
	\draw[step=0.5cm,color=black, line width=1.5] (0,0) grid (2.5,2.5);
    \node at (0.25,2.25) {$x$};
	\node at (1.25,1.75) {$x$};
	\node at (2.25,1.25) {$x$};
	\node at (0.75,0.75) {$y$};
	\node at (1.75,0.75) {$y$};
	\node at (0.75,0.25) {$y$};
	\node at (1.75,0.25) {$y$};
	\end{tikzpicture}
\quad \quad 
	\begin{tikzpicture}
	\draw[step=0.5cm,color=black, line width=1.5] (0,0) grid (2.5,2.5);
    \node at (0.25,2.25) {$x$};
	\node at (1.75,1.75) {$x$};
	\node at (0.75,1.25) {$x$};
	\node at (1.25,0.75) {$y$};
	\node at (2.25,0.75) {$y$};
	\node at (1.25,0.25) {$y$};
	\node at (2.25,0.25) {$y$};
	\end{tikzpicture}
\quad \quad 
	\begin{tikzpicture}
	\draw[step=0.5cm,color=black, line width=1.5] (0,0) grid (2.5,2.5);
    \node at (0.75,2.25) {$x$};
	\node at (1.75,1.75) {$x$};
	\node at (0.25,1.25) {$x$};
	\node at (1.25,0.75) {$y$};
	\node at (2.25,0.75) {$y$};
	\node at (1.25,0.25) {$y$};
	\node at (2.25,0.25) {$y$};
	\end{tikzpicture}
	\caption{Placements of a 3-chain with leftover options for a 2-chain}
\label{fig:2-3chains}
	\end{center}
\end{figure}
Within the cells marked $y$, a 2-chain with the cells a knight's move apart can exist, but there will always be at least one cell a knight's move apart from at least one $y$ cell in the 3-chain. Then both chains are connected by a knight's move, and this is a strict knight's move Latin square. This contradicts with our original assumption, therefore all size-5 knight Latin squares are strict knight's move Latin squares.
\end{proof}

\subsection{Bishop Latin squares}

For $n=2$, any Latin square is a bishop Latin square. But the opposite is true for $n=3$: no bishop Latin square exists.

\begin{theorem}
Bishop Latin squares exist for even orders.
\end{theorem}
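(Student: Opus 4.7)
The plan is to leverage the Kronecker product construction from Section~\ref{sec:def} together with the trivial observation about order~$2$. For the base case $n=2$, take the Latin square $B$ with first row $(1,2)$ and second row $(2,1)$. The two $1$'s sit on one diagonal and the two $2$'s sit on the other, so every cell has a same-valued partner one bishop's move away. Thus $B$ is already a bishop Latin square.

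For a general even $n=2m$ with $m\ge 2$, I would pick any $m\times m$ Latin square $M$ and form the product $M\times B$ as defined in the section on products. By construction this is a Latin square of order $2m$, and the grid decomposes into $m^{2}$ non-overlapping $2\times 2$ blocks, each of which is a copy of $B$ whose entries have been incremented by the same constant $2(M_{a,c}-1)$. Adding a uniform constant inside a block preserves which pairs of cells hold equal values, so within every block the two occurrences of each of the two present symbols still sit on a diagonal, exactly one bishop step apart. Therefore every cell of $M\times B$ has a same-valued neighbor reachable by a bishop's move inside its own block, which establishes the bishop property.

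There is essentially no obstacle here; the argument is just the ``$n=2$'' instance of the general product principle announced at the start of Section~\ref{sec:Chess}. The only thing worth articulating carefully is that a diagonal move of exactly one square is a legal bishop's move, so the partner inside the $2\times 2$ block genuinely witnesses the chiece condition. If one prefers to avoid naming the product explicitly, the same square can be described directly: write $n=2m$, choose any Latin square $M$ of order $m$, and place the value $2(M_{a,c}-1)+B_{b,d}$ into cell $(2a+b,2c+d)$; the verification that this is Latin and bishop is identical.
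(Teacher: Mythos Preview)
Your argument is correct: the $2\times2$ Latin square is a bishop square, and the Kronecker product with any $m\times m$ Latin square gives a $2m\times2m$ bishop square because the bishop witness for each cell lives entirely inside its own $2\times2$ block.

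This is a different route from the paper's own proof of this theorem. The paper instead shows directly that every \emph{right-cyclic} (or left-cyclic) Latin square of even order is a bishop square: in a $1$-cyclic square the same value recurs along a wrapped diagonal, and for even $n$ the anti-diagonal has period $n/2$, so even the two corner cells find a partner. Your product construction is shorter and, as you note, is exactly the $n=2$ instance of the general product principle stated later as Theorem~\ref{thm:ChessLargerSizes}; in that sense your proof makes the present theorem redundant once the later theorem is in hand. The paper's cyclic argument buys something extra, namely that an entire natural family of even-order Latin squares (all $\pm1$-cyclic ones, $2\cdot n!$ of them) are bishop squares, not just the blocky product squares; this stronger conclusion is also reused later when the paper proves that strict bishop's move squares exist for even orders.
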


\begin{proof}
We prove this by showing that all left-cyclic or right-cyclic Latin squares of even order are bishop Latin squares. Without loss of generality, assume that the square is right-cyclic. Then each number is repeated in a broken (wrapped-around) diagonal, so we only need to ensure that the two numbers in the top right corner and the bottom left corner see the same numbers on the main anti-diagonal. The numbers in the anti-diagonal are shifted by 2. This means that, for an even-ordered square, the diagonal will repeat after $\frac{n}{2}$ cells, so the numbers in both corners are repeated in the main anti-diagonal a bishop's move away. Thus, all left-cyclic or right-cyclic Latin squares of even order are bishop Latin squares.
\end{proof}

Before discussing odd orders, we make the following observation. Suppose, similar to a checkerboard, the Latin square is alternatively colored black and white. Assume the bottom-left corner is black. Then for odd orders, the main diagonals are black. Now consider a particular entry in the Latin square. If it is on a white cell, then the sum of the coordinates of this entry is odd; and if it is on a black cell, then the sum of the coordinates of this entry is even. If we sum the coordinates of a particular number in a Latin square, we get an even number, $n(n+1)$. This means that each number is placed on white cells an even number of times. This observation allows us to produce an elegant proof of why order 5 bishop Latin squares do not exist.

\begin{proposition}
Bishop Latin squares of order 5 do not exist.
\end{proposition}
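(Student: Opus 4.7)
The plan is to exploit the checkerboard coloring observation made just before the proposition. Color the $5 \times 5$ grid so that the corners are black; then, using coordinates $(i,j)$ with $1 \le i,j \le 5$, a cell is black exactly when $i+j$ is even. This gives $13$ black cells and $12$ white cells, and crucially every main or broken diagonal consists of cells of a single color, so any bishop's move preserves color. The pre-established parity observation (each number occupies white cells an even number of times) then forces, for each symbol $x$, the number $w_x$ of white-cell occurrences of $x$ to lie in $\{0, 2, 4\}$, with corresponding black-cell count $b_x = 5 - w_x \in \{5, 3, 1\}$.

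The first key step is to rule out $b_x = 1$. In this case the symbol $x$ sits on a unique black cell $c$, while all other occurrences of $x$ are on white cells. Since a bishop's move preserves color, no other copy of $x$ can be a bishop's move from $c$, so $c$ violates the bishop condition. Hence $w_x \ne 4$, i.e., $w_x \in \{0, 2\}$ for every symbol $x$.

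The second key step is a simple global count. Summing over the five symbols, $\sum_{x=1}^{5} w_x$ equals the total number of white cells, which is $12$. But with each $w_x \in \{0,2\}$, the maximum possible value of this sum is $5 \cdot 2 = 10 < 12$, a contradiction. Therefore no $5 \times 5$ bishop Latin square exists.

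I expect the only subtle point is the first step above, which is where the bishop hypothesis is actually used; the rest is bookkeeping from the parity observation already established. No heavy casework or construction is needed, and the argument is short enough to fit a proposition of this level.
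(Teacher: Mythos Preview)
Your proof is correct and follows essentially the same approach as the paper: checkerboard coloring, the parity observation forcing $w_x\in\{0,2,4\}$, ruling out $w_x=4$ via the isolated black cell, and then the count $\sum w_x \le 10 < 12$. If anything, your write-up is slightly more explicit than the paper's, which asserts directly that each number must occupy exactly $2$ white and $3$ black cells before reaching the $10$ versus $12$ contradiction.
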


\begin{proof}
Assume, for the sake of contradiction, that an order 5 bishop Latin square exists. Then each number is a bishop's move apart from the same number, only one instance of the particular number cannot be placed on a particular color. Also, from the above observation, each number must be placed on the white cells an even number of times. Thus, the only possibility is for each number to occupy 2 white cells and 3 black cells, implying that the Latin square has 10 white cells and 15 black cells. But this contradicts with the Latin square having 12 white cells and 13 black cells, so the original assumption that bishop Latin squares of order 5 exist is false.
\end{proof}

But bishop Latin squares do exist for larger odd orders.

\begin{theorem}
If $n$ is a product of two distinct odd primes, then there exists a bishop Latin square of order $n$.
\end{theorem}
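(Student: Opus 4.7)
The plan is to produce a $k$-cyclic Latin square whose shift $k$ makes the two diagonal directions of the board pick up the two prime factors of $n=pq$ separately. Concretely, I would use the Chinese Remainder Theorem to select $k\in\{1,\dots,n-1\}$ with $k\equiv 1\pmod{p}$ and $k\equiv -1\pmod{q}$. Then $k\not\equiv 0$ modulo either $p$ or $q$, so $\gcd(k,n)=1$ and the $k$-cyclic construction from the first row $(1,2,\dots,n)$ is a genuine Latin square. Moreover $\gcd(k-1,n)=p$ and $\gcd(k+1,n)=q$, because $k-1$ is divisible by $p$ but not $q$, and $k+1$ by $q$ but not $p$.

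The next step is to translate the bishop condition into arithmetic. In a $k$-cyclic square, cells $(i_1,j_1)$ and $(i_2,j_2)$ carry the same number exactly when $j_2-j_1\equiv k(i_2-i_1)\pmod{n}$. A bishop move between them requires $j_2-j_1=\pm(i_2-i_1)$, which combined with the cyclic formula gives $(k\mp 1)(i_2-i_1)\equiv 0\pmod{n}$. Writing $d=i_2-i_1$, this says $d$ is a nonzero multiple of $n/\gcd(k-1,n)=q$ (for a main-diagonal partner) or of $n/\gcd(k+1,n)=p$ (for an anti-diagonal partner).

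The bulk of the work is then a case analysis on a cell $(i,j)$ to actually produce an in-board partner. Trying $d=\pm q$ along the main diagonal shows a partner exists unless $(i,j)$ lies in the top-right $q\times q$ block ($i\le q$ and $j>(p-1)q$) or the bottom-left $q\times q$ block ($i>(p-1)q$ and $j\le q$); the two remaining corner-type combinations are ruled out by $(p-1)q>q$, which holds because $p\ge 3$. In these two exceptional blocks I switch to $d=\pm p$ along the anti-diagonal. For the top-right block the partner $(i+p,\,j-p)$ lies in the board, since $i\le q\le (q-1)p$ and $j>(p-1)q\ge p$; symmetrically $(i-p,\,j+p)$ works for the bottom-left block. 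Each of the required inequalities reduces to $pq\ge p+q$, equivalently $(p-1)(q-1)\ge 1$, which is automatic for $p,q\ge 2$.

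The one genuinely delicate point is this corner analysis: after the main-diagonal partner fails in a $q\times q$ corner, I need to be certain that the anti-diagonal partner lands inside the $n\times n$ board rather than escaping off the opposite edge. The arithmetic works out cleanly precisely because both primes are at least $3$, so the hypothesis of distinct odd primes is genuinely used there; no serious obstruction, but careful off-by-one bookkeeping on the inequalities $i\le(q-1)p$ and $j\ge p+1$ (and their symmetric counterparts) is what carries the argument.
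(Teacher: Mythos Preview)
Your argument is correct and rests on the same idea as the paper's: build a Latin square whose entries are an affine function of $(i,j)$ modulo $n$, tuned so that the two diagonal directions have periods $p$ and $q$ respectively, and then check that every cell sees a same-value partner along one of its diagonals. The paper reaches this square via the explicit increments $c=(p+q)/2$ down columns and $c-p=(q-p)/2$ along rows (so the SE and SW diagonals pick up increments $q$ and $p$), whereas you reach an equivalent square by choosing the cyclic shift $k$ via CRT so that $\gcd(k-1,n)=p$ and $\gcd(k+1,n)=q$; after a relabeling these constructions coincide. Your corner-block case analysis is in fact more explicit than the paper's one-line appeal to ``each cell is on a diagonal at least $n/2$ cells long,'' and it makes transparent exactly where the hypothesis that both primes are at least $3$ is used.
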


\begin{proof}
Let us denote the smaller prime factor of $n$ as $p$, the larger as $q$, and the average of them as $c$. Note that $c$ is an integer that is coprime with both $p$ and $q$. Let the first row start with 1, increasing by $c-p$ and taking the result modulo $n$ when necessary. We construct each next row from the previous row by adding $c$ and taking the result modulo $n$. Once the square is created, we add 1 to all of the numbers so that the square includes $n$ and does not include 0.

As $c-p$ is coprime with $n$, each row contains all the numbers once. Similarly, as $c$ is coprime with $n$, each column contains every number once. Thus we constructed a Latin square. Now we look at diagonals. Consider a number $x$. The next number in the SE direction is $x+ 2c-p = x+q$ modulo $n$ and the next number in the SW direction is $x -c+p+c = x+p$ modulo $n$. Thus, on the diagonals in the SE direction, each number repeats itself every $p$ rows, while in the other diagonal direction every number repeats itself every $q$ rows. As both $p$ and $q$ are greater than 2 and each cell is on a diagonal at least $\frac{n}{2}$ cells long, each number must repeat itself in a diagonal which it is on. Thus, each number is a bishop's move away from the same number.
\end{proof}

Let \textit{strict bishop's move Latin squares} be Latin squares where all the cells with the same number are connected by a chain of bishop's moves.

\begin{theorem}
For odd orders greater than 1, strict bishop's move Latin squares do not exist. For even orders, they exist.
\end{theorem}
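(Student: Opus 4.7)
The plan is to handle the two parts separately: for odd orders greater than $1$, I would use a checkerboard-coloring argument to rule out existence; for even orders, I would exhibit an explicit construction, reusing the right-cyclic Latin square from the preceding theorem on (non-strict) bishop Latin squares.

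For odd $n > 1$, color the cells black and white by the parity of $i + j$. Any bishop's move preserves this color, so in a strict bishop's move Latin square the $n$ cells containing any particular number must all lie on one color class. Hence each color class is a disjoint union of full ``number classes'' of size $n$, and both class sizes, $(n^2 + 1)/2$ and $(n^2 - 1)/2$, must be divisible by $n$. Their difference is $1$, which forces $n \mid 1$ and contradicts $n > 1$.

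For even $n$, I would take the right-cyclic Latin square $L_{i,j} = ((j - i) \bmod n) + 1$. For each number $k$, setting $s = k - 1$, the cells containing $k$ lie on (at most) two SE-diagonals: an upper piece along $j - i = s$ and a wrap-around piece along $j - i = s - n$. Each piece is internally bishop-connected along its own diagonal. When $s = 0$ only one piece appears and we are done, so the real content is to connect the two pieces when $s \in \{1, \dots, n-1\}$. I would observe that a cell $(a, a + s)$ in the upper piece and a cell $(b, b + s - n)$ in the lower piece lie on a common SW-diagonal precisely when $b = a + n/2$. Because $n$ is even, $n/2$ is an integer, and a short two-case check (for example, $a = n/2$ when $s \le n/2$ and $a = n - s$ when $s > n/2$) yields a valid such pair within the board. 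The resulting bishop link between the two pieces, together with the internal connectivity of each piece, makes the square strict.

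The main obstacle, which is really bookkeeping rather than a conceptual difficulty, is verifying the interval-intersection condition for every $s$; evenness of $n$ enters precisely because it makes $n/2$ an integer shift, mirroring the feature that powered the non-strict bishop argument for even orders.
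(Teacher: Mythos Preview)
Your proof is correct and follows the same approach as the paper: the checkerboard-coloring argument for odd $n$, and the right-cyclic (what the paper calls the 1-cyclic) construction for even $n$. Your even-case analysis is in fact more thorough than the paper's, which only explicitly treats the top-right and bottom-left corner cells and leaves the connectivity of the two diagonal pieces for general $s$ implicit.
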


\begin{proof}
For odd orders, we color the board in checkerboard colors as we have previously done. Then all the cells with the same numbers must be placed on the same color. Thus, the number of cells of the same color must be divisible by $n$. A Latin square of odd order has $\frac{n^2+1}{2}$ cells of one color and $\frac{n^2-1}{2}$ cells of another. This creates a contradiction as $\frac{n^2+1}{2}$ is not divisible by $n$ for $n > 1$. Thus, strict bishop's move Latin squares of an odd order that is not 1 do not exist.

For even orders, the 1-cyclic Latin square is a strict bishop's move Latin square. The cells not in the top-right or bottom-left corner repeat in the SE and NW direction. The cells in the top-right and bottom-left corner repeat in the diagonal between them at a distance $\frac{n}{2}$.
\end{proof}

\subsection{King Latin squares of odd sizes}

For $n=2$, any Latin square is a king Latin square. But the opposite is true for $n=3$: no king Latin square exists.

\begin{theorem}
King Latin squares of odd sizes do not exist.
\end{theorem}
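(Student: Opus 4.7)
The plan is to derive a contradiction from the first two rows alone. The key observation is that the king-neighborhoods of the top-row cells are so restricted that, combined with the Latin square axioms, they force a rigid structural relationship between rows $1$ and $2$ that is incompatible with odd $n$.

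First I would set up notation: let $\sigma_i\colon\{1,\ldots,n\}\to\{1,\ldots,n\}$ denote the permutation giving row $i$, so that cell $(i,j)$ contains $\sigma_i(j)$. Fix a cell $(1,j)$ in the top row. Its king-neighbors are $(1,j\pm 1)$, $(2,j)$, and $(2,j\pm 1)$. The Latin square row/column conditions rule out the first three as possibly repeating the value $\sigma_1(j)$. Hence the king property forces $\sigma_1(j)$ to appear at $(2,j-1)$ or $(2,j+1)$. Setting $\tau:=\sigma_2^{-1}\circ\sigma_1$ (well-defined, and automatically $\tau(j)\neq j$ by the column axiom), this becomes the clean constraint $\tau(j)\in\{j-1,j+1\}$ for every $j$, where out-of-range values are excluded.

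The second step is to show that no permutation $\tau$ of $\{1,\ldots,n\}$ satisfies $|\tau(j)-j|=1$ for all $j$ when $n$ is odd. This is a purely combinatorial statement. I would argue by a forced propagation from the left boundary: $\tau(1)\in\{2\}$ forces $\tau(1)=2$, and then the preimage constraint $\tau^{-1}(1)\in\{2\}$ (since $0$ is out of range) forces $\tau(2)=1$, pairing $\{1,2\}$. By induction on $k$, assuming $\tau$ has been forced to swap $\{2m-1,2m\}$ for $m\le k$: the value $2k$ is already in the image via $\tau(2k-1)$, so $\tau(2k+1)\in\{2k,2k+2\}$ must equal $2k+2$; similarly the only remaining preimage of $2k+1$ is $2k+2$, so $\tau(2k+2)=2k+1$. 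Thus $\tau$ is forced to be the involution $(1\,2)(3\,4)\cdots$. For $n$ even this terminates consistently, but for $n$ odd it exhausts $\{1,\ldots,n-1\}$ and leaves position $n$ with $\tau(n)\in\{n-1\}$, which collides with the previously forced $\tau(n-2)=n-1$ — a contradiction.

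The main, and essentially only, subtlety is to verify the forcing step cleanly by invoking both the image constraint (a candidate value is already taken) and the preimage constraint (the only cell that could map to a value has already been assigned elsewhere). Once this is set out carefully, no colorings, chess-specific identities, or global counting arguments are required; the obstruction is entirely local to the interaction of the top two rows. In particular the hypothesis that $n$ is odd enters only at the very last step, via the parity of the length of the forced pairing.
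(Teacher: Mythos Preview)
Your proof is correct and follows essentially the same approach as the paper's: both arguments restrict attention to the first two rows, observe that each top-row value is forced into an adjacent column of the second row, and then show by a forced-propagation/induction that this is impossible for odd $n$. The only cosmetic differences are that you package the constraint via the permutation $\tau=\sigma_2^{-1}\circ\sigma_1$ and peel from the left end, whereas the paper normalizes $\sigma_1=\mathrm{id}$ and peels symmetrically from both ends to reduce to the base case $n=3$.
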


\begin{proof}
Size 1 King Latin squares do not exist. And for larger sizes, it is enough to show that it is impossible to arrange the first 2 rows such that a number in the first row repeats a king's move apart.

Without loss of generality, assume that the first row contains the numbers 1 to $n$ in order. Suppose $n=3$. Then both numbers 1 and 3 need to be in the center of the second-row, creating a contradiction.

Suppose $n$ is an odd number greater than 3. Then 1 must be placed in the second cell of the first row and $n$ in the second to last cell of the second row. Each number in the first row needs to see (by king's move) another number of the same value in the second row. That means each number in the second row needs to be seen from a number in the first row. That means we have to place 2 in the first cell of the second row and $n-1$ in the last cell. That means numbers $3$ through $n-2$ need to be placed in the cells numbered 3 through $n-2$ in the second row. It follows that if we can create two rows for $n$ such that every cell in the first row sees the same value by king's move, then we can create two rows for $n-2$. Now we can invoke induction. Given that the king Latin square of order 3 doesn't exist, it follows that it doesn't exist for any odd $n$. 
\end{proof}

\subsection{Larger sizes}

We discuss larger sizes for all chieces, in one theorem.

\begin{theorem}\label{thm:ChessLargerSizes}
If an $n$ by $n$ chiece Latin square exists, then for all positive integers $m$, there exists an $nm$ by $nm$ chiece Latin square.
\end{theorem}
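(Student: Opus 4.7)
The plan is to invoke the Kronecker product construction defined in the Products of Latin squares subsection. Let $B$ be the given $n$ by $n$ chiece Latin square, and let $L$ be any $m$ by $m$ Latin square (such a square exists for every $m$, e.g.\ a 1-cyclic one). Form the product $P = L \otimes B$, which fills the $nm$ by $nm$ grid by placing $(L_{a,c}-1)n + B_{b,d}$ in the cell with coordinates $(an+b, cn+d)$. The paper has already verified that $P$ is a Latin square of order $nm$, so this step requires no new work.

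The heart of the proof is then showing that $P$ inherits the chiece property from $B$. Fix a cell $(an+b, cn+d)$ in $P$ and let $v = (L_{a,c}-1)n + B_{b,d}$ be its entry. Because $B$ is a chiece Latin square, there is a cell $(b',d')$ inside $B$ such that $B_{b',d'} = B_{b,d}$ and the displacement from $(b,d)$ to $(b',d')$ realizes a chiece's move. I would then consider the cell $(an+b', cn+d')$ of $P$: its entry is $(L_{a,c}-1)n + B_{b',d'} = v$, matching the original, and its displacement from $(an+b, cn+d)$ is $(b-b', d-d')$, identical to the displacement inside $B$. Hence the two cells lie a chiece's move apart in the big square as well.

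The only substantive point to justify is that chiece moves are translation-invariant, which holds because every chess piece is defined by a set of displacement vectors that do not depend on absolute position; shifting both endpoints by $(an, cn)$ preserves the displacement and hence preserves the move. I would note that the chiece partner I produced stays within the single $n$ by $n$ block indexed by $(a,c)$, so no subtleties about block boundaries or about matches involving different blocks arise in the argument.

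The main (and really only) obstacle would be a degenerate case where the chiece's move crosses outside the block, but the construction avoids this automatically: the partner is chosen inside $B$ before translation, so it stays inside the block after translation. I therefore expect the proof to be short, essentially a direct verification once the Kronecker product is introduced.
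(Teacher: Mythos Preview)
Your proposal is correct and follows exactly the same route as the paper: form the Kronecker product with the $m$ by $m$ Latin square on the outside and the given $n$ by $n$ chiece square as the inner block, then observe that each cell finds its chiece partner entirely within its own $n$ by $n$ block. Your write-up is more explicit about translation-invariance of chiece moves and about why no block-boundary issues arise, but the argument is the same.
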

\begin{proof}
Consider an $n$ by $n$ chiece Latin square $C$ and any  $M$: any $m$ by $m$ Latin square $M$. The product of $M$ and $C$ is another square that consists of $n$ by $n$ blocks. Each element within any block is equal to the corresponding element in $C$ incremented by the same number. So each chiece can find another chiece a chiece's move apart in its $n$ by $n$ block. Thus, this construction produces a valid chiece Latin square of size $nm$.
\end{proof}

\begin{corollary}
Bishop and king Latin squares exist for all even orders, and knight Latin squares exist for all orders divisible by four or five.
\end{corollary}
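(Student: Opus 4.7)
The plan is to apply Theorem~\ref{thm:ChessLargerSizes} in each of the three cases, so the task reduces to exhibiting a small ``seed'' chiece Latin square of the appropriate size and then invoking the multiplication construction to obtain all multiples of that seed size.

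For the bishop and king claims, the seed is the trivial order-2 Latin square. As noted in the opening paragraphs of Section~\ref{sec:Chess}, for $n=2$ every Latin square is a bishop Latin square and every Latin square is a king Latin square, since the two cells in any row (resp.\ column, resp.\ diagonal) are at king's and bishop's distance from one another. Thus taking $n=2$ in Theorem~\ref{thm:ChessLargerSizes}, with $m$ ranging over all positive integers, yields bishop Latin squares and king Latin squares of every even order.

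For the knight claim, we need seed squares of orders $4$ and $5$. The lexicographically earliest cyclic Latin square of order~$4$ displayed in Figure~\ref{fig:LC4} is easily checked to be a knight Latin square (in fact, each copy of a given symbol is joined to another by a knight's move). For order~$5$, either of the strict knight's move Latin squares of Figure~\ref{fig:skm5}, mentioned in Section~\ref{sec:intro}, is a knight Latin square by definition. Applying Theorem~\ref{thm:ChessLargerSizes} with $n=4$ produces knight Latin squares of every order divisible by $4$, and with $n=5$ produces knight Latin squares of every order divisible by $5$.

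There is essentially no obstacle here beyond bookkeeping: the theorem does the work, and the only thing to check is that the stated small examples really are chiece Latin squares for the relevant chiece, which is immediate from the figures already in the paper.
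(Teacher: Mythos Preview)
Your proof is correct and matches the paper's intended argument exactly: the corollary is stated without proof in the paper because it follows immediately from Theorem~\ref{thm:ChessLargerSizes} together with the small seed squares (order~2 for bishop and king, orders~4 and~5 for knight) already exhibited earlier in the text. Your identification of the relevant seed examples (the $2\times 2$ Latin square, Figure~\ref{fig:LC4}, and Figure~\ref{fig:skm5}) is precisely what the paper relies on.
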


It follows that toroidal chiece Latin squares also exist in the corresponding sizes. For example, using the construction in Theorem~\ref{thm:ChessLargerSizes}, we can generate a king and bishop Latin square of order 4 from such a square of order 2. As a result, we get the lexicographically earliest Latin square of order 4 shown in Figure~\ref{fig:LE4}.

\section{Acknowledgements}

This project was done as part of MIT PRIMES STEP, a program that allows students in grades 7 through 9 to try research in mathematics. Tanya Khovanova is the mentor of this project. We are grateful to PRIMES STEP for this opportunity.

\end{document}